%% LyX 2.0.0rc1 created this file.  For more info, see http://www.lyx.org/.
%% Do not edit unless you really know what you are doing.
\documentclass[oneside,english]{amsart}
\usepackage{mathptmx}
\usepackage[T1]{fontenc}
\usepackage[latin9]{inputenc}
\usepackage{fancyhdr}
\pagestyle{fancy}
\usepackage{babel}
\usepackage{mathrsfs}
\usepackage{url}
\usepackage{amsthm}
\usepackage{amssymb}
\usepackage[unicode=true,pdfusetitle,
 bookmarks=false,
 breaklinks=false,pdfborder={0 0 1},backref=false,colorlinks=false]
 {hyperref}

\makeatletter
%%%%%%%%%%%%%%%%%%%%%%%%%%%%%% Textclass specific LaTeX commands.
\numberwithin{equation}{section}
\numberwithin{figure}{section}
\theoremstyle{plain}
\newtheorem{thm}{\protect\theoremname}
  \theoremstyle{plain}
  \newtheorem{conjecture}[thm]{\protect\conjecturename}
  \theoremstyle{definition}
  \newtheorem{problem}[thm]{\protect\problemname}
  \theoremstyle{plain}
  \newtheorem{lem}[thm]{\protect\lemmaname}
  \theoremstyle{plain}
  \newtheorem{cor}[thm]{\protect\corollaryname}
  \theoremstyle{definition}
  \newtheorem{defn}[thm]{\protect\definitionname}
  \theoremstyle{remark}
  \newtheorem{notation}[thm]{\protect\notationname}
  \theoremstyle{plain}
  \newtheorem{fact}[thm]{\protect\factname}
  \theoremstyle{remark}
  \newtheorem{rem}[thm]{\protect\remarkname}
  \theoremstyle{plain}
  \newtheorem{prop}[thm]{\protect\propositionname}

%%%%%%%%%%%%%%%%%%%%%%%%%%%%%% User specified LaTeX commands.
% Proper Script letters, more fonts
\usepackage{mathrsfs}
\usepackage{amssymb}
\usepackage{dsfont}
\usepackage{verbatim}
\usepackage{url}

% Basic Algebraic Objects
\def\N{\mathbb {N}}
\def\Z{\mathbb {Z}}
\def\Q{\mathbb {Q}}
\def\R{\mathbb {R}}
\def\C{\mathbb {C}}

% Operators & Relations
\newcommand\restrict{\!\upharpoonright}

\def\isom{\simeq}
\def\lap{\triangle}
\def\Lap{\lap}

\def\eqdef{\overset{\text{def}}{=}}
\def\bs{\backslash}

% Additional Operators

\DeclareMathOperator{\Ad}{Ad}

\def\Ker{\qopname\relax o{Ker}}

\DeclareMathOperator{\Lie}{Lie}

\DeclareMathOperator{\rk}{rk}

\DeclareMathOperator{\Stab}{Stab}

\def\vol{\qopname\relax o{vol}}

% Logic

% Other constructs

\newcommand\norm[1]{\left\Vert {#1} \right\Vert}

\newcommand{\wklim}[1]{\xrightarrow[#1]{\textrm{wk-*}}}

% Shorthand
  % one half

\DeclareRobustCommand
  \rddots{\mathinner{\mkern1mu\raise\p@
    \vbox{\kern7\p@\hbox{.}}\mkern2mu
    \raise4\p@\hbox{.}\mkern2mu\raise7\p@\hbox{.}\mkern1mu}}

\newcommand{\Cc}{C_{\mathrm{c}}}

\newcommand{\Cic}{\Cc^{\infty}}

% arrows
\newcommand\xhookrightarrow[2][]{\ext@arrow 0062{\hookrightarrowfill@}{#1}{#2}}
\def\hookrightarrowfill@{\arrowfill@\lhook\relbar\rightarrow}

% classical groups

\newcommand\SL{\mathrm{SL}}

\newcommand\SO{\mathrm{SO}}

% lie algebras

\newcommand\liea{\mathfrak{a}}
\newcommand\lieb{\mathfrak{b}}

\newcommand\lieg{\mathfrak{g}}
\newcommand\lieh{\mathfrak{h}}

\newcommand\liek{\mathfrak{k}}

\newcommand\liem{\mathfrak{m}}
\newcommand\lien{\mathfrak{n}}

\newcommand\liep{\mathfrak{p}}

\newcommand\lienb{\bar\lien}

\newcommand\ars{\liea_{\R}^{*}}
\newcommand\acs{\liea_{\C}^{*}}

% Fonts
%\newcommand\AA{\mathbb{A}} -- (Swedish Å) use \Adele instead

%\newcommand\LL{\mathbb{L}} -- "\LeftLabel"

%\newcommand\SS{\mathbb{S}} -- (German ß)

\DeclareMathAlphabet{\mathcal}{OMS}{cmsy}{m}{n}

\newcommand\cC{\mathcal{C}}

\newcommand\cF{\mathcal{F}}

\newcommand\sC{\mathscr{C}}

% Adeles & alg gps

% automorphic forms

% Cohomology groups

% standard abbereviations

\def\wrt{w.r.t.\ }

% references

% Categories

% Names

\newcommand\mb{\bar\mu}
\newcommand\mbn{{\mb}_n}
\newcommand\mbi{{\mb}_\infty}
\newcommand\mn{\mu_n}
\newcommand\mi{\mu_\infty}
\newcommand\sn{\sigma_n}

\newcommand\nt{\tilde{\nu}}
\newcommand\ntn{\nt_n}
\newcommand\nti{\nt_\infty}

\newcommand\mk{M\bs K}
\newcommand\Vc{C(\mk)}
\newcommand\VK{V_K}
\newcommand\VKh{\hat{V}_K}

\newcommand\Test{\Cic(X)_K}
\newcommand\Distr{\Test'}

\newcommand\Dr{\Delta^\mathrm{r}}

\newcommand\clC{\overline{\sC}}
\newcommand\opC{{\sC}}
\newcommand\cCw{\clC_w}
\newcommand\oCw{\opC_w}

\newcommand\sequ[1]{\left\{ {#1}_n \right\}_{n=1}^{\infty}}

\newcommand\gK{\left(\lieg,K\right)}

\newcommand\rR{\Delta(\lieg:\liea)}
\newcommand\CR{\Delta(\lieg_\C:\lieh_\C)}
\newcommand\rW{W(\lieg:\liea)}
\newcommand\CW{W(\lieg_\C:\lieh_\C)}

\fancyhead{} % delete the defaults
\fancyhead[L]{\fancyplain{Silberman: Quantum Unique Ergodicity ...}{Silberman: Quantum Unique Ergodicity ...}}
\fancyhead[R]{\fancyplain{\thepage}{\thepage}}
\fancyfoot{}

\makeatother

  \providecommand{\conjecturename}{Conjecture}
  \providecommand{\corollaryname}{Corollary}
  \providecommand{\definitionname}{Definition}
  \providecommand{\factname}{Fact}
  \providecommand{\lemmaname}{Lemma}
  \providecommand{\notationname}{Notation}
  \providecommand{\problemname}{Problem}
  \providecommand{\propositionname}{Proposition}
  \providecommand{\remarkname}{Remark}
\providecommand{\theoremname}{Theorem}

\begin{document}

\title{Quantum unique ergodicity on locally symmetric spaces: the degenerate
lift}

\author{Lior Silberman}

\address{Department of Mathematics, University of British Columbia, Vancouver\ \ BC\ ~V6T
1Z2, Canada}

\email{\url{lior@math.ubc.ca}}

\subjclass[2000]{Primary 22E50, 43A85}

\keywords{Quantum unique ergodicity; microlocal lift; spherical dual.}

\date{\today}
\begin{abstract}
Given a measure $\mbi$ on a locally symmetric space $Y=\Gamma\backslash G/K$,
obtained as a weak-{*} limit of probability measures associated to
eigenfunctions of the ring of invariant differential operators, we
construct a measure $\mi$ on the homogeneous space $X=\Gamma\backslash G$
which lifts $\mbi$ and which is invariant by a connected subgroup
$A_{1}\subset A$ of positive dimension, where $G=NAK$ is an Iwasawa
decomposition. If the functions are, in addition, eigenfunctions of
the Hecke operators, then $\mi$ is also the limit of measures associated
to Hecke eigenfunctions on $X$. This generalizes previous results
of the author and A.\ Venkatesh to the case of {}``degenerate''
limiting spectral parameters.
\end{abstract}
\maketitle

\section{Introduction}

In the work of the author with A.\ Venkatesh \cite{SilbermanVenkatesh:SQUE_Lift}
we investigated the asymptotic behaviour of eigenfunctions on high-rank
locally symmetric spaces, under the assumption that the spectral parameters
(see below) were \emph{non-degenerate}, in that their imaginary parts
were located away from the walls of the Weyl chamber (in particular,
this forced the spectral parameters to lie on the unitary axis). This
paper removes this assumption, at the cost of a weaker invariance
statement for the limiting measures. The main extra ingredient is
a simple calculation in the {}``compact'' model of induced representation
for semisimple Lie groups.

\subsection{The problem of Quantum Unique Ergodicity; statement of the result}

Let $Y$ be a (compact) Riemannian manifold. To a non-zero eigenfunction
$\psi_{n}$ of the Laplace-Beltrami operator $\Lap$ with eigenvalue
$-\lambda_{n}$ we attach the probability measure 
\[
\mbn(\varphi)=\frac{1}{\norm{\psi_{n}}^{2}}\int_{Y}\left|\psi_{n}(y)\right|^{2}\varphi(y)dy\,.
\]
Classifying the possible limits (in the weak-{*} sense) of sequences
$\sequ{\mb}$ where $\lambda_{n}\to\infty$ is known as the problem
of {}``Quantum Unique Ergodicity'' (specifically, {}``QUE on $Y$'').
Nearly all attacks on this problem begin by associating to each measure
$\mbn$ a distribution ({}``microlocal lift'') $\mn$ on the unit
cotangent bundle $S^{*}Y$ which projects to $\mbn$ on $Y$, in such
a way that any weak-{*} limit of the $\mn$ is a probability measure,
invariant under the geodesic flow on $S^{*}Y$. This construction
(due to Schnirel'man, Zelditch and Colin de Verdière, \cite{Schnirelman:Avg_QUE,Zelditch:SL2_Lift_I,CdV:Avg_QUE})
leads to a reformulation of the problem ({}``QUE on $S^{*}Y$''),
where one seeks to classify the weak-{*} limits of sequences such
as $\sequ{\mu}$. Now results from dynamical systems concerning measures
invariant under the geodesic flow can be brought to bear. In particular,
under the very general assumption that the geodesic flow on $S^{*}Y$
is ergodic, it was shown by these authors that the Riemannian volume
measure on $S^{*}Y$ is always a limit measure for some sequence of
eigenfunctions (hence its projection, the Riemannian volume on $Y$,
is always a limit of a sequence of measures $\mbn$). The most spectacular
realization of this approach to QUE is in the work of Lindenstrauss
\cite{Lindenstrauss:SL2_QUE}. There it is shown that on congruence
hyperbolic surfaces and for eigenfunctions $\psi_{n}$ which are also
eigenfunctions of the so-called Hecke operators the Riemannian volume
is the \emph{only} limiting measure%
\footnote{For non-compact surfaces this statement requires the result of \cite{Sound:EscapeOfMass}.%
}. In fact, Rudnick-Sarnak \cite{RudnickSarnak:Conj_QUE} conjecture
that this phenomenon (uniqueness of the limit) holds for all manifolds
$Y$ of (possibly variable) negative sectional curvature. Results
in that level of generality have also appeared recently, starting
with the breakthrough of \cite{Anantharaman:QUE_Ent}.

In this paper we consider a technical aspect of the problem on locally
symmetric spaces $Y=\Gamma\backslash G/K$ of non-compact type. Here
$G$ is a semisimple Lie group with finite center, $K$ a maximal
compact subgroup and $\Gamma<G$ a lattice (thus $Y$ is of finite
volume but not necessarily compact). On such spaces there is a natural
commutative algebra of differential operators containing the Laplace-Beltrami
operator, and it is better to consider joint eigenfunctions of this
algebra. This is the algebra of $G$-invariant differential operators
on $G/K$, which may be identified with the center of the universal
enveloping algebra of the Lie algebra of $G$. Accordingly, let $\psi_{n}\in L^{2}(Y)$
be joint eigenfunctions of this algebra. The approach of microlocal
analysis applies to this setting as well (see \cite{AnantharamanSilberman:SQUE-density-preprint}),
lifting measures to distributions on $S^{*}Y$, but in fact limits
of these measures are supported on singular subsets there, isomorphic
to submanifolds of the form $\Gamma\backslash G/M_{1}$ for compact
subgroups $M_{1}$. Here we directly construct a lift to this space.
Moreover, in the congruence setting it is desirable to have the lift
be manifestly equivariant with respect to the action of the Hecke
algebra. In the paper \cite{SilbermanVenkatesh:SQUE_Lift} this was
done under a genericity assumption ({}``non-degeneracy'') -- that
the sequence of spectral parameters $\nu_{n}\in\acs$ (here $\liea=\Lie(A)$
where $G=NAK$ is an Iwasawa decomposition) associated to the $\psi_{n}$
be contained in a proper subcone of the open Weyl chamber in $i\ars$.
Under that assumption, and weak-{*} limit $\mbi$ of a sequence as
above was seen to be the projection of an $A$-invariant positive
measure $\mi$ on $X$. In this paper the non-degeneracy assumption
is removed, giving our main result:
\begin{thm}
\label{mainthm:lift}Assume $\mbn\wklim{n\to\infty}\mbi$. Then there
exists a non-trivial connected subgroup $A_{1}\subset A$ and an $A_{1}$-invariant
positive measure $\sigma_{\infty}$ on $X$ projecting to $\mbi$.

In more detail, let $\Test$ be the space of right $K$-finite smooth
functions of compact support on $X=\Gamma\backslash G$. By a \emph{distribution}
we shall mean an element of its algebraic dual. Then, after passing
to a subsequence, we obtain distributions $\mn\in\Distr$ and functions
$\tilde{\psi}_{n}\in L^{2}(X)$ such that:
\begin{enumerate}
\item \label{enu: m-Lift}(\emph{Lift}) The distributions $\mn$ project
to the measures $\mbn$ on $Y$. In other words, for $\varphi\in\Cic(Y)$
we have $\mn(\varphi)=\mbn(\varphi)$.
\item \label{enu: m-positive}Let \emph{$\sn$} be the measure on $X$ such
that \emph{$d\sn(x)=\left|\tilde{\psi}_{n}(x)\right|^{2}dx$}. Then:

\begin{enumerate}
\item (\emph{Positivity}) $\sequ{\sigma}$ converges weak-{*} to a measure
$\sigma_{\infty}$ on $X$, necessarily a positive measure of total
mass $\leq1$.
\item (\emph{Consistency}) For any \emph{$\varphi\in\Cic(X)_{K}$}, $\left|\sn(\varphi)-\mn(\varphi)\right|\to0$
as $n\to\infty$.
\end{enumerate}
\item \label{enu: m-A-inv}(\emph{Invariance}) Let the normalized spectral
parameters%
\footnote{For $G$ simple, these are $\frac{\nu_{n}}{\norm{\nu_{n}}}$. For
$G$ semisimple see the discussion in \cite[\S5.1]{SilbermanVenkatesh:SQUE_Lift}%
} $\tilde{\nu}_{n}$ converge to a limiting parameter $\tilde{\nu}_{\infty}$
in the closed positive Weyl chamber of $i\ars$. Then $\mi$ is invariant
by $A_{1}Z_{K}(A_{1})$, where $A_{1}\subset A$ is the set of elements
fixed by the stabilizer $W_{1}=\Stab_{W}(\tilde{\nu}_{\infty})$.
\item \label{enu: m-equiv}(\emph{Equivariance}) $\tilde{\psi}_{n}$ belong
to the irreducible subrepresentation of $G$ in $L^{2}(Y)$ generated
by $\psi_{n}$. In particular, if $\mathcal{H}$ is a commutative
algebra of bounded operators on $L^{2}(X)$ which commute with the
$G$-action and $\psi_{n}$ is a joint eigenfunction of $\mathcal{H}$
then so is $\tilde{\psi}_{n}$, with the same eigenvalues.
\end{enumerate}
\end{thm}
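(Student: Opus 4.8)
The plan is to build the lift by the "compact model" of induced representations. First I would realize each $\psi_n$ inside an irreducible unitary subrepresentation $\pi_n \hookrightarrow L^2(Y)$ (automorphic, and Hecke-isotypic when applicable), and recognize $\pi_n$ as a subquotient of a spherical principal series $\Ind_{NAM}^G\left(\nu_n\right)$ with spherical vector corresponding to $\psi_n$. In the compact model this principal series is realized on $L^2(K/M)$, independently of $\nu_n$; the spherical vector is the constant function, so the measure $\mbn$ is (up to normalization) pushed forward from the "diagonal" matrix coefficient. I would then form $\tilde\psi_n \in L^2(X)$ as an explicit integral over $K/M$ (a smeared matrix coefficient of $\pi_n$ against a $\nu_n$-independent test vector in the compact model) chosen so that $d\sigma_n = |\tilde\psi_n|^2\,dx$ projects to $d\mbn$ on $Y$ up to an error tending to $0$ — this is the content of (1), (2a), (2b), where (2a) follows from uniform $L^2$-bounds and weak-$*$ compactness of probability measures on $X$ (total mass $\le 1$ because mass can escape if $\Gamma$ is non-uniform), and (2b) is a Cauchy–Schwarz/asymptotic-orthogonality estimate in the compact model. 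Since $\tilde\psi_n$ lives in $\pi_n$, part (4) is immediate: any bounded operator commuting with the $G$-action preserves $\pi_n$ and acts by a scalar on the irreducible $\pi_n$, so $\tilde\psi_n$ inherits the Hecke eigenvalues of $\psi_n$.

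For the invariance statement (3) I would analyze how $A$ acts on the chosen vector in the compact model as $\nu_n \to \infty$ along rays with $\tilde\nu_n \to \tilde\nu_\infty$. The key computation (the "simple calculation in the compact model" advertised in the introduction) is that the action of $a = \exp(H) \in A$ on a vector supported near the identity coset in $K/M$ is, to leading order, multiplication by $e^{\langle \nu_n, H\rangle}$ times a translation whose size is governed by $\langle \tilde\nu_n, H\rangle$; when $H$ lies in the subspace $\liea_1 = \{H : w H = H \ \forall w \in W_1\}$ fixed by $W_1 = \Stab_W(\tilde\nu_\infty)$, the limiting parameter $\tilde\nu_\infty$ is orthogonal to $\liea_1$ (being $W_1$-fixed in $i\ars$ it pairs trivially with the $(-1)$-eigenspaces that span $\liea_1$), so the translation degenerates and the phase $e^{\langle \nu_n, H\rangle}$ is killed by the absolute value $|\tilde\psi_n|^2$. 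Hence $\sigma_n$ is asymptotically invariant under $A_1 = \exp(\liea_1)$, and so is the limit $\mi$. Invariance under the compact group $Z_K(A_1)$ I would obtain for free by averaging the construction over $Z_K(A_1)$ (or observing that $Z_K(A_1)$ commutes with $A_1$ and acts on the compact model preserving the relevant vector), giving invariance under $A_1 Z_K(A_1)$.

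The main obstacle is the degenerate asymptotics of the $A$-action in the compact model: when $\nu_n$ approaches a wall, the intertwining operators degenerate and the naive leading-order analysis can fail, so I must choose the smearing vector (the test function on $K/M$) carefully — concentrated at scale matched to $\norm{\nu_n}$ in the "non-degenerate" directions but spread out in the directions toward which $\tilde\nu_n$ degenerates — so that the resulting $\tilde\psi_n$ both projects correctly to $\mbn$ and has the right invariance. Controlling this requires a uniform (in $\nu_n$) estimate for the compact-model action of $a \in A$, essentially a stationary-phase/Laplace-type bound for $a \cdot f(kM) = e^{-\langle \nu_n + \rho, H(a^{-1}k)\rangle} f(\kappa(a^{-1}k)M)$, valid up to the wall; once that estimate is in hand, parts (1)–(4) follow by the routine weak-$*$ compactness and orthogonality arguments sketched above.
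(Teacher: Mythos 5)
Your outline reproduces the easy parts of the argument (realizing $\psi_{n}$ in a spherical principal series, weak-{*} compactness, Schur's lemma for part \eqref{enu: m-equiv}), but it has two genuine gaps. First, you treat the uniform $L^{2}$-bound as routine (``follows from uniform $L^{2}$-bounds''), whereas it is exactly the new ingredient of the paper. In the degenerate situation the parameters $\nu_{n}$ need not lie on the unitary axis; for $\Re(\nu_{n})\neq0$ the compact-model norm $\norm{f}_{L^{2}(\mk)}$ is \emph{not} the unitary norm of $\pi_{n}$ -- the invariant Hermitian form is $\left\langle A(\nu_{n};w)f,f\right\rangle _{L^{2}(K)}$, which on non-spherical $K$-types has no obvious comparison with $\norm{f}_{L^{2}(K)}^{2}$. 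Without the a-priori bound $\norm{R_{n}(f)}_{L^{2}(X)}\leq\norm{f}_{L^{2}(K)}$ (Theorem \ref{mainthm: intops}, proved in the paper via Arthur's rationality of the $K$-finite matrix coefficients of $\tilde{A}(\nu;w)$ together with a maximum-principle argument on the closed chamber), the measures attached to pairs of smeared vectors have no uniform total-variation bound, so your compactness step and the Cauchy--Schwarz estimate behind \eqref{enu: m-positive} never get started. Your proposed substitute -- a stationary-phase estimate for the $A$-action ``uniform up to the wall'' -- is precisely the thing that degenerates at the wall, and you give no mechanism for it.

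Second, your mechanism for \eqref{enu: m-A-inv} rests on a false orthogonality. Since $W_{1}=\Stab_{W}(\nti)$ is generated by the reflections $s_{\alpha}$ with $\left\langle \alpha,\nti\right\rangle =0$, the fixed space $\liea_{1}=\liea^{W_{1}}$ is the $+1$-eigenspace and \emph{contains} the vector dual to $\nti$, while $\nti$ vanishes on the orthogonal complement of $\liea_{1}$; so $\nti$ is as far as possible from being orthogonal to $\liea_{1}$ (in the non-degenerate case $W_{1}$ is trivial, $A_{1}=A$, and the limit is invariant in particular along the direction dual to $\nti$ -- your heuristic would predict the opposite). The paper obtains the invariance algebraically, not by translation asymptotics on $K/M$: writing elements of the center of $U(\lieg_{\C})$ as $\tau_{-\rho_{\lieh}}(\mathcal{P})+b+c$ and applying them to $\varphi_{0}\otimes\delta$ shows the limit is annihilated by $\mathcal{P}'(\nti)$ for every $\CW$-invariant polynomial $\mathcal{P}$; these derivatives span $\lieh_{\C}^{W_{1}'}$, whose intersection with $\liea_{\C}$ is $\liea_{\C}^{W_{1}}$. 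Relatedly, the $Z_{K}(A_{1})$-invariance is not had ``for free by averaging afterwards'': it is built into the lift from the outset through the distribution $\delta_{1}$ (integration over $M\backslash M_{1}$, $M_{1}=Z_{K}(\nti)$), and consistency \eqref{enu: m-positive}(b) is proved by the integration-by-parts calculus moving $M_{1}$-invariant $K$-finite functions between the two arguments, with test vectors $f_{n}$ chosen along a diagonal so that $f_{n}^{2}\to\delta_{1}$ -- not with a $\nu_{n}$-independent smearing vector as you propose.
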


\subsection{Sketch of the proof}

As can be expected, we shall trace a path very similar to that of
the previous work. Choose a pair of functions, one from the irreducible
subrepresentation of $L^{2}(X)$ generated by $\psi_{n}$ and one
from its dual. Now integrating a function on $X$ against the product
of these two functions defines a measure there ($\mbn$ is a special
case of this construction), and we will study limits of this larger
family of measures. We construct an asymptotic calculus for these
measures by uniformizing the representation via the compact picture
of a principal series representations induced from a potentially non-unitary
character. A prerequisite for taking limits in this setting is the
following \emph{a-priori} bound on these measures \wrt the uniformization,
which is the key ingredient that was not available during the writing
of \cite{SilbermanVenkatesh:SQUE_Lift}. 
\begin{thm}
\label{mainthm: intops}Let $\left(\pi,V_{\pi}\right)\in\hat{G}$
be spherical, and let $R\colon\left(I_{\nu},V_{K}\right)\to\left(\pi,V_{\pi}\right)$
be an%
\footnote{Such $R$ always exist.%
} intertwining operator with the real part of $\nu\in\acs$ in the
closed positive chamber $\cC$, normalized such that $\left\Vert R(\varphi_{0})\right\Vert _{V_{\pi}}=1$,
where $\varphi_{0}\in V_{K}$ is the constant function $1$. We then
have $\left\Vert R(f)\right\Vert _{V_{\pi}}\leq\left\Vert f\right\Vert _{L^{2}(K)}$
for any $f\in V_{K}$.
\end{thm}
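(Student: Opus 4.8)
The plan is to understand $R$ concretely via the compact picture. In the compact model, $V_K \subset L^2(K)$ (the $K$-finite vectors), the representation $I_\nu$ acts on functions on $K$, and the crucial point is that for $\mathrm{Re}\,\nu$ in the closed positive chamber, the intertwining operator factors through the standard (Knapp--Stein) intertwining operator $A(\nu)\colon I_\nu \to I_{-\bar\nu}$ or, better, through the invariant Hermitian form. Indeed, since $\pi$ is an irreducible \emph{unitary} spherical representation and $R$ is a nonzero intertwiner from $I_\nu$, the pullback of the inner product $\langle\cdot,\cdot\rangle_{V_\pi}$ along $R$ is a nonzero $(\lieg,K)$-invariant positive semidefinite Hermitian form on $V_K$. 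By the standard theory of intertwining operators for principal series, every invariant Hermitian form on $I_\nu$ is a scalar multiple of the form $\langle f, g\rangle_\nu = \langle A(\nu) f, g\rangle_{L^2(K)}$ coming from the long intertwining operator (when this is the relevant one), so $\|R(f)\|_{V_\pi}^2 = c\,\langle A(\nu) f, f\rangle_{L^2(K)}$ for some $c>0$. Thus the theorem reduces to a statement purely about the operator $A(\nu)$ on $L^2(K)$: after normalizing so that the spherical vector $\varphi_0$ has norm $1$, we must show $\langle A(\nu) f, f\rangle \le \langle A(\nu)\varphi_0,\varphi_0\rangle^{-1}\cdot\|f\|_{L^2(K)}^2 \cdot (\text{const})$ — in fact exactly $\|R(f)\|\le\|f\|_{L^2(K)}$.

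\textbf{The key computation.} The heart of the matter is the behaviour of $A(\nu)$ on $K$-types. Decompose $V_K = \bigoplus_\tau V_K(\tau)$ into $K$-isotypic components; since $I_\nu|_K \cong L^2(K)$ as a $K$-representation (for $K$ acting by right translation, using that $I_\nu$ is induced from a \emph{minimal} parabolic and $\pi$ is spherical so the inducing data is trivial on $M$), each spherical $K$-type $\tau$ (those containing an $M$-fixed vector) appears, and $A(\nu)$ acts on $V_K(\tau)$ by a scalar $a_\tau(\nu)$ — this is the content of the rank-one reduction / Gindikin--Karpelevich-type formula, and it is where the "simple calculation in the compact model" of the introduction lives. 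The normalization $\|R(\varphi_0)\| = 1$ fixes $c\, a_{\mathbf{1}}(\nu) = 1$ where $\mathbf 1$ is the trivial $K$-type. The claim $\|R(f)\|\le\|f\|_{L^2(K)}$ is then equivalent to
\[
 c\, a_\tau(\nu) \le 1 \qquad\text{for every $K$-type }\tau,
\]
i.e. to the monotonicity statement $a_\tau(\nu) \le a_{\mathbf 1}(\nu)$ for all $\tau$, when $\mathrm{Re}\,\nu \in \cC$. So the plan is: (i) establish the scalar action and write $a_\tau(\nu)$ as an explicit product over roots of ratios of Gamma factors (Gindikin--Karpelevich), via reduction to $\SL_2$ / rank one; (ii) prove that each rank-one factor, evaluated at a parameter with $\mathrm{Re}$ in the positive chamber, is maximized (in absolute value, and in fact is real and positive after the right normalization) at the trivial $K$-type. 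For $\SL_2(\R)$ the relevant factor is a ratio like $\frac{\Gamma(\text{something}+ \text{Re}\,\nu/2)}{\Gamma(\text{something})}$ and one checks monotonicity in the $K$-type parameter directly from $\log\Gamma$ convexity / the product formula for the Gamma function.

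\textbf{Main obstacle and how to handle it.} The main obstacle is precisely step (ii): proving the inequality $|a_\tau(\nu)| \le a_{\mathbf 1}(\nu)$ uniformly for $\mathrm{Re}\,\nu$ ranging over the \emph{whole} closed positive chamber, including the degenerate walls where $\pi$ may be a non-tempered (e.g. complementary-series or even more degenerate) representation and where $A(\nu)$ can have a nontrivial kernel. When $A(\nu)$ is not invertible the "form $=$ scalar$\times A(\nu)$" argument needs care: one should pass to the semisimplification / the quotient $I_\nu \to \pi$, note that $R$ kills exactly the radical of the invariant form, and work with the induced positive-definite form on the quotient, whose $\tau$-eigenvalue is still $a_\tau(\nu)$ on the $K$-types that survive and which we may take to be $0$ (hence $\le a_{\mathbf 1}(\nu)$) on those that don't. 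The other delicate point is making the reduction to rank one rigorous across walls: one uses that $A(\nu)$ for a general $\nu$ factors as a product of rank-one operators $A_\alpha$ over the positive roots made negative by $w_0$, each depending on $\langle\nu,\alpha^\vee\rangle$, and that $\mathrm{Re}\langle\nu,\alpha^\vee\rangle \ge 0$ on $\cC$ for simple $\alpha$ — but for \emph{non-simple} positive roots one must order the product so that the intermediate parameters also have the right sign, which is the standard reduced-decomposition bookkeeping for Weyl group elements. Granting these, the whole estimate collapses to a one-variable Gamma-function inequality, and one obtains $\|R(f)\|_{V_\pi} \le \|f\|_{L^2(K)}$ with equality on the spherical vector by construction.
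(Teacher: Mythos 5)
Your reduction of the theorem to a bound on the normalized standard intertwining operator --- pulling back the unitary structure of $\pi$ along $R$ and identifying the resulting invariant Hermitian form with $\left\langle A(\nu;w)f,g\right\rangle _{L^{2}(K)}$ up to a positive scalar fixed by the spherical normalization --- is exactly the first step of the paper's proof (with the caveat that the relevant $w$ is an involution satisfying $w\nu=-\bar{\nu}$, not necessarily the long element). The genuine gap is in how you propose to bound that operator. Your step (i) asserts that $A(\nu)$ acts on each $K$-isotypic subspace of $V_{K}$ by a scalar $a_{\tau}(\nu)$ given by a Gindikin--Karpelevich-type product of Gamma factors. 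This is false in general: as a $K$-module, $I_{\nu}\restrict_{K}\isom\mathrm{Ind}_{M}^{K}\mathbf{1}$, so the $\tau$-isotypic subspace occurs with multiplicity $\dim\tau^{M}$, which exceeds $1$ for most groups beyond the multiplicity-one (essentially rank-one) situations; already for $\SL_{3}(\R)$ with $K=\SO(3)$ and $M$ the diagonal sign matrices, the five-dimensional $K$-type has $\dim\tau^{M}=2$. On such an isotypic subspace $A(\nu;w)$ acts as $B_{\tau}(\nu)\otimes\Id$ for a matrix $B_{\tau}(\nu)\in\End(\tau^{M})$, and the explicit Gamma-factor evaluation is in general available only for the trivial $K$-type, i.e.\ for $r(\nu;w)$; there is no closed formula for $B_{\tau}(\nu)$ on which to run your $\log\Gamma$-convexity/monotonicity argument, and the rank-one factorization only shifts the problem to rank-one operators that still act by matrices on multiplicity spaces. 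Your computation does go through verbatim for $\SL_{2}(\R)$ --- that is precisely the worked example in Section \ref{sub:SL2-example}, where $\left|c_{2m}(z)\right|\leq1$ is checked from the product $\prod_{j}\frac{z-(2j+1)}{z+(2j+1)}$ --- but it does not yield the theorem for general $G$.

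The paper circumvents exactly this obstruction by never diagonalizing $\tilde{A}(\nu;w)$: for fixed $K$-finite $\varphi,\psi$ the matrix coefficient $\left\langle \tilde{A}(\nu;w)\varphi,\psi\right\rangle _{L^{2}(K)}$ is a rational function of $\nu$ (Arthur, Fact \ref{fac: Int-ops}(4)); it is bounded by $\left\Vert \varphi\right\Vert \left\Vert \psi\right\Vert $ on the unitary axis because $\tilde{A}$ is unitary there, and it has no poles when $\Re(\nu)$ lies in the relevant open chamber since $r(\nu;w)$ has no zeros there. Restricting to the line $\nu_{z}=i\Im(\nu)+z\Re(\nu)$ and applying an elementary maximum-principle lemma for rational functions on a half-plane gives $\left|\left\langle \tilde{A}(\nu;w)\varphi,\psi\right\rangle \right|\leq\left\Vert \varphi\right\Vert \left\Vert \psi\right\Vert $, hence the operator bound, with the closed chamber (your ``walls'' worry) handled by meromorphy rather than by analyzing the radical of the form. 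To salvage your route you would need either this kind of interpolation input or a new uniform estimate on the multiplicity-space matrices $B_{\tau}(\nu)$, which is not available in the literature.
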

Surprisingly, we could not find this useful fact in the literature.
It is proved in Section \ref{sec: Int-bds} as a consequence of the
rationality of $K$-finite matrix coefficients by bounding the analytical
continuation of the normalized intertwining operators $\tilde{A}(\nu;w)\colon(I_{\nu},V_{K})\to(I_{w\nu},V_{K})$
associated to elements $w$ of the Weyl group $W$.

With this bound in hand we extend the asymptotic calculus of \cite{SilbermanVenkatesh:SQUE_Lift}
to our setting. We construct the distributions $\mn$ in Section \ref{sub: lift-one}
(see Definition \ref{def: Lift}). Integration by parts gives the
measures $\sn$ and establishes their properties (Corollaries \ref{cor: positive}
and \ref{cor: equiv}). Finally, in Section \ref{sub: A-inv} we obtain
the desired invariance property.

\subsection{A measure rigidity problem on locally symmetric spaces}

An introduction to the relations between the general problem of Quantum
Unique Ergodicity and the cases of manifolds of negative curvature
and locally symmetric spaces of non-positive curvature may be found
in the paper \cite{SilbermanVenkatesh:SQUE_Lift}. We consider here
only the latter case, where again we have the Conjecture
\begin{conjecture}
(Sarnak) The sequence $\sequ{\mb}$ converges weak-{*} to the normalized
volume measure $\frac{d\vol_{Y}}{\vol(Y)}$.
\end{conjecture}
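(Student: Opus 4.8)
The statement is the (open) Sarnak conjecture, so what follows is a strategy rather than a complete proof: the plan is to transfer the question from eigenfunctions on $Y$ to invariant measures on $X=\Gamma\backslash G$ via Theorem \ref{mainthm:lift}, and then invoke measure rigidity for diagonalizable flows. First I would apply Theorem \ref{mainthm:lift} to $\sequ{\mb}$: passing to a subsequence, there is a positive measure $\mi$ on $X$, invariant under $A_1Z_K(A_1)$ with $A_1\subset A$ the subtorus fixed by $W_1=\Stab_W(\tilde\nu_\infty)$, which projects to $\mbi$ on $Y$. In the congruence setting the equivariance clause realizes $\mi$ as a weak-* limit of $|\tilde\psi_n|^2\,dx$ with $\tilde\psi_n$ Hecke eigenfunctions sharing the eigenvalues of $\psi_n$; the usual convexity estimate for Hecke operators then shows that $\mi$-almost every point is Hecke-recurrent. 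Since $\frac{d\vol_Y}{\vol(Y)}$ is the projection of the normalized Haar measure $m_X$, and a right-$K$-invariant measure on $X$ is determined by its projection to $Y$, it suffices to prove $\mi=m_X$ --- in particular that $\mi$ has full mass, ruling out escape of mass.

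Second, I would feed $\mi$ into the classification of invariant measures for $\RR$-diagonalizable actions. If $\tilde\nu_\infty$ is regular then $W_1$ is trivial, $A_1=A$ is the full Cartan, and one is in the higher-rank situation of \cite{SilbermanVenkatesh:SQUE_Lift}: the Einsiedler--Katok--Lindenstrauss measure-classification theorem says an $A$-invariant, Hecke-recurrent probability measure on $\Gamma\backslash G$ all of whose ergodic components have positive entropy is $m_X$. If $\tilde\nu_\infty$ lies on a wall, $W_1$ is a proper parabolic subgroup of $W$ and $A_1$ can be small --- one-dimensional in the most degenerate case --- and bare $A_1$-invariance no longer pins down the measure. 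Then the argument must run as in Lindenstrauss's proof on the hyperbolic surface \cite{Lindenstrauss:SL2_QUE}: establish a positive lower bound for the entropy of $\mi$ under the $A_1$-flow, and combine it with Hecke recurrence and the classification of $A_1$-invariant, Hecke-recurrent, positive-entropy measures to conclude $\mi=m_X$. In both regimes the extra invariance of $\mi$ under $Z_K(A_1)$ enlarges the acting group, which should help exclude ergodic components supported on intermediate homogeneous subspaces.

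The analytic input re-enters at the positive-entropy step. One route is to retain the microlocal lift of $\mbn$ to a distribution on $S^*Y$ as in \cite{AnantharamanSilberman:SQUE-density-preprint}, whose weak-* limits are geodesic-flow invariant and supported on the singular stratum identified with a quotient of $X$; transporting the entropy lower bounds of Anantharaman type \cite{Anantharaman:QUE_Ent} (or, arithmetically, Bourgain--Lindenstrauss-type bounds on the mass of Bowen balls extracted from Hecke recurrence) through this identification should give a quantitative lower bound on $h_{\mi}(a)$ for $a$ in $A_1$ in terms of the Lyapunov exponents, enough to defeat the small-entropy ergodic components appearing in the classification theorem.

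The main obstacle is that the programme does not close: it reduces Sarnak's conjecture to a measure-rigidity statement that is itself open, and three points are genuinely hard. (i) In the most degenerate non-arithmetic case --- $Y$ compact non-arithmetic and $\tilde\nu_\infty$ maximally singular, so $A_1$ is one-dimensional with no Hecke operators --- the problem is essentially the Rudnick--Sarnak conjecture \cite{RudnickSarnak:Conj_QUE} for the geodesic flow on a rank-one object, for which nothing like the required rigidity is known beyond the arithmetic case of \cite{Lindenstrauss:SL2_QUE}. (ii) In the non-compact arithmetic case one must rule out escape of mass --- the analogue of \cite{Sound:EscapeOfMass} --- in settings where no such result exists, and must prove positivity of entropy along the whole of $A_1$, not merely generically, which is considerably more than the single-flow entropy bounds currently available. (iii) The higher-rank classification theorems still leave room for intermediate invariant measures unless one can rule out positive entropy of ergodic components supported on proper subgroups, which again needs structural information about microlocal lifts that has not been developed. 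Thus Theorem \ref{mainthm:lift} makes the reduction to dynamics unconditional, but each of these three dynamical ingredients remains a genuine obstacle, and the conjecture should be regarded as open.
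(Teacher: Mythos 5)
The statement you were asked about is Sarnak's conjecture, which the paper only states and does not prove: it records the Lindenstrauss programme (lift, extra smoothness, measure rigidity), claims its Theorem \ref{mainthm:lift} settles Step (1) in the degenerate case, and explicitly observes that the rigidity step for $A_1$-invariant measures is open (citing Maucourant's counterexample and posing it as a Problem). Your proposal correctly treats the conjecture as open and sketches essentially the same programme with the same obstacles the paper identifies, so it matches the paper's own discussion; there is no proof in the paper to compare against beyond that outline.
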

We recall the strategy pioneered by Lindenstrauss, which applies for
a lattice $\Gamma$ for which there exists a large algebra $\mathcal{H}$
of bounded normal operators on $L^{2}(X)$, commuting with the $G$-action.
We then consider a sequence of joint eigenfunctions of both the differential
operators and of $\mathcal{H}$, and assume the associated measures
$\mbn$ converge to a measure $\mbi$. 
\begin{enumerate}
\item \emph{Lift}: Passing to a subsequence, lift $\mbi$ to a positive
measure $\mi$ on $X$ which projects to $\mbi$ under averaging by
$K$ and is invariant under a subgroup $H<G$, in a way which respects
the $\mathcal{H}$-action.
\item \emph{Extra smoothness}: Using the geometry of the action of $\mathcal{H}$,
show that any measure $\mu_{\infty}$ thus obtained is not too singular
(for example, that the dimension of its support must be strictly larger
than that of $H$).
\item \emph{Measure rigidity}: Using classification results for $H$-invariant
measures on $X$, show that the additional information of Step (2)
forces $\mu_{\infty}$ to be a $G$-invariant measure on $X$.
\end{enumerate}
The result of this paper extend Step (1) of the strategy to the degenerate
case and the methods used for Step (2) in \cite{LindenstraussBourgain:SL2_Ent}
and \cite{SilbermanVenkatesh:AQUE_Ent_preprint} only use the Hecke
operators. Unfortunately, current higher-rank measure classification
results (such as the one in \cite{EKL:SLn_Rigid}, used for Step (3)
in \cite{SilbermanVenkatesh:AQUE_Ent_preprint}) do not readily generalize
to the case of $A_{1}$-invariant measures; in this context see the
counter-example {[}\cite{Maucourant:MargulisCounter}. However, we
are not considering a general $A_{1}$-invariant measure, so the natural
question from our point of the view is the following. It should be
compared with Lindenstrauss's rank $1$ measure classification Theorem
\cite[Thm.\ 1.1]{Lindenstrauss:SL2_QUE}.
\begin{problem}
Let $\Gamma<G$ be a congruence lattice associated to a $\Q$-structure
on $G$ and let $A_{1}\subset A$ be a non-trivial one-parameter subgroup
fixed by a subgroup of the Weyl group. Let $\tilde{\psi}_{n}\in L^{2}(X)$
be eigenfunctions of the Hecke operators on $X=\Gamma\backslash G$
such that their associated probability measures $\sigma_{n}$ converge
weak-{*} to an $A_{1}$-invariant measure $\sigma_{\infty}$. Is it
true that $\sigma_{\infty}$ is then a (continuous) linear combination
of algebraic measures on $X$?
\end{problem}

\section{Notation and Preliminaries}

\subsection{Structure theory -- real groups}

Let $G$ be a connected almost simple Lie group%
\footnote{The results of this paper hold (with natural modifications) for reductive
$G$. The details may be found in \cite[\S5.1]{SilbermanVenkatesh:SQUE_Lift}.%
}, $\lieg=\Lie(G)$ its Lie algebra. Let $\Theta$ be a Cartan involution
for $G$, $\theta$ the differential of $\Theta$ at the identity
and let $\lieg=\liep\oplus\liek$ be the associated polar decomposition.
We fix a maximal Abelian subalgebra $\liea\subset\liep$. Its dimension
is the (real) rank $\rk G$.

The dual vector space to $\liea$ will be denoted $\ars$, and will
be distinguished from the complexification $\acs\eqdef\ars\otimes_{\R}\C$.
For $\alpha\in\ars$ set $\mathfrak{g}_{\alpha}=\left\{ X\in\lieg\mid\forall H\in\liea:[H,X]=\alpha(H)X\right\} $.
Let $\Delta=\Delta(\lieg:\liea)$ denote the set of \emph{roots} (the
non-zero $\alpha\in\ars$ such that $\lieg_{\alpha}\neq0$). Then
$\lieg=\lieg_{0}\oplus\bigoplus_{\alpha\in\Delta}\lieg_{\alpha}$,
and $\lieg_{0}=\liea\oplus\liem$ where $\liem=Z_{\liek}(\liea)$.
For $\alpha\in\ars$ we set $p_{\alpha}=\dim\lieg_{\alpha}$, $q_{\alpha}=\dim\lieg_{2\alpha}.$

The Killing form $B$ induces a positive-definite pairing $\left\langle X,Y\right\rangle =-B(X,\theta Y)$
on $\lieg$ which remains non-degenerate when restricted to $\liea$.
We identify $\liea$ and $\ars$ via this pairing, giving us a non-degenerate
pairing $\left\langle \cdot,\cdot\right\rangle $ on $\ars$ and letting
$H_{\alpha}\in\liea$ denote the element corresponding to $\alpha\in\Delta$.
With this Euclidean structure on $\ars$ the subset $\Delta$ is a
root system, and we denote its Weyl group by $\rW$. A root $\alpha\in\Delta$
is \emph{reduced} if $\frac{1}{2}\alpha\notin\Delta$. The set of
reduced roots $\Dr\subset\Delta$ is a root system as well. To $w\in W$
we associate the subset $\Phi_{w}=\Dr\cap\Delta^{+}\cap w^{-1}\Delta^{-}$
of positive reduced roots $\beta$ such that $w\beta$ is negative.

We fix a simple system $\Pi\subset\Delta$, giving us a notion of
positivity, and let $\Delta^{+}$ ($\Delta^{-}$) denote the set of
positive (negative) roots, $\rho=\frac{1}{2}\sum_{\alpha\in\Delta^{+}}p_{\alpha}\alpha\in\ars$.
For $\beta\in\Dr$ and $\nu\in\acs$ we set $\nu_{\beta}=\frac{2\left\langle \nu,\beta\right\rangle }{\left\langle \beta,\beta\right\rangle }$.
Then 
\[
\opC=\left\{ \nu\in\ars\mid\forall\beta\in\Pi:\nu_{\beta}>0\right\} 
\]
is the open positive Weyl chamber. Its closure will be denoted $\cC$.
We will also consider the open domain 
\[
\Omega=\opC+i\ars=\left\{ \nu\in\acs\vert\Re(\nu)\in\opC\right\} 
\]
 and its closure $\bar{\Omega}$. More generally, for $w\in W$ we
set
\[
\oCw=\left\{ \nu\in\ars\mid\forall\beta\in\Phi_{w}:\nu_{\beta}>0\right\} 
\]
leading in the same fashion to $\cCw\subset\ars$ and $\Omega_{w}\subset\bar{\Omega}_{w}\subset\acs$.

Returning to the Lie algebra we set $\lien=\oplus_{\alpha\in\Delta^{+}}\lieg_{\alpha}$,
$\lienb=\theta\lien=\oplus_{\alpha\in\Delta^{-}}\lieg_{\alpha}$ and
obtain the Iwasawa decomposition $\lieg=\lien\oplus\liea\oplus\liek$.
On the group level we set $K=\left\{ g\in G\mid\Theta(g)=g\right\} $,
$A=\exp\liea$, $N=\exp\lien$, $\bar{N}=\exp\lienb$. These are closed
subgroups with Lie algebras $\liek,\liea,\lien,\lienb$ respectively:
$K$ is a maximal compact subgroup, $A$ a maximal diagonalizable
subgroup and $N$ a maximal unipotent subgroup. With these we have
the Iwasawa decomposition $G=NAK$. Another important subgroup is
$M=Z_{K}(\liea)$ which normalizes $N,\bar{N}$. $M$ is not necessarily
connected, but $\liem=\Lie(M)$ holds, and $B=NAM$ is the Borel subgroup.
The action of $W=N_{K}(\liea)/Z_{K}(\liea)$ on $\ars$ gives an isomorphism
of $W$ and the algebraic Weyl group $\rW$ defined above.

Let $dk$ be a probability Haar measure on $K$, $da$, $dn$ Haar
measures on $A$ and $N$. Then $dn\cdot a^{2\rho}da\cdot dk$ is
a Haar measure on $G$. The linear functional $f\mapsto\int_{K}f(k)dk$
on the space $\mathcal{F}^{\rho}=\left\{ f\in C(G)\colon f(nag)=a^{2\rho}f(g)\right\} $
is right $G$-invariant.

\subsection{Complexification}

Let $\lieb$ be a maximal torus in the compact Lie algebra $\liem$,
$\lieh=\liea\oplus\lieb$. Then $\lieh$ is a maximal Abelian semisimple
subalgebra of $\lieg$, that is a Cartan subalgebra.

$\lieg_{\C}$ is a complex semisimple Lie algebra of which $\lieh_{\C}$
is a Cartan subalgebra. We let $\CR$ denote the associated root system,
$\CW$ its Weyl group. The restriction of any $\alpha\in\CR$ to $\liea$
is either a root of $\lieg$ or zero. We fix a notion of positivity
on $\CR$ compatible with our choice for $\rR$, and let $\rho_{\lieh}\in\lieh_{\C}^{*}$
denote half the sum of the positive roots in $\CR$. Once $\rho_{\lieh}$
makes its appearance we shall use $\rho_{\liea}$ for $\rho$ defined
before.

The image of $N_{K}(\lieh)$ in $\CW$ is $\tilde{W}=N_{K}(\lieh)/Z_{M}(\lieb)$,
since any $k\in N{}_{K}(\lieh)$ must normalize $\liea$, $\lieb$
separately.
\begin{lem}
$W(\liem\colon\lieb)\isom N_{M}(\lieb)/Z_{M}(\lieb)$ is normal in
$\tilde{W}$; the quotient is naturally isomorphic to $\rW$.\end{lem}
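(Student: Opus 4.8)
The plan is to work throughout with the adjoint action on $\lieg$ and to reduce everything to two elementary structural facts about the compact group $M=Z_K(\liea)$: first, that $M$ normalizes $\liea$ and hence $\liem=Z_\liek(\liea)$; and second, that any two maximal Abelian subalgebras of the compact Lie algebra $\liem$ are conjugate under the connected group $M^\circ$.

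First I would pin down the three groups in play. Since every element of $K$ preserves the $\theta$-eigenspaces $\liep$ and $\liek$, an element of $K$ normalizes $\lieh=\liea\oplus\lieb$ exactly when it normalizes $\liea=\lieh\cap\liep$ and $\lieb=\lieh\cap\liek$ separately; thus $N_K(\lieh)=N_K(\liea)\cap N_K(\lieb)$, and in particular $N_K(\lieh)\subseteq N_K(\liea)$. The pointwise stabilizer of $\lieh$ in $K$ is $Z_K(\liea)\cap Z_K(\lieb)=M\cap Z_K(\lieb)=Z_M(\lieb)$, which is why $\tilde W=N_K(\lieh)/Z_M(\lieb)$ is well defined and embeds into $\CW$. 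Intersecting with $M=Z_K(\liea)$ gives $M\cap N_K(\lieh)=Z_K(\liea)\cap N_K(\lieb)=N_M(\lieb)$, so $W(\liem\colon\lieb)=N_M(\lieb)/Z_M(\lieb)$ is literally a subgroup of $\tilde W$.

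For normality, take $k\in N_K(\lieh)$ and $m\in N_M(\lieb)$. Then $kmk^{-1}$ normalizes $\lieb$ because $k$ and $m$ each do. To see that it also lies in $M$, let $H\in\liea$: since $k^{-1}\in N_K(\liea)$ we have $\Ad(k^{-1})H\in\liea$, hence $\Ad(m)\Ad(k^{-1})H=\Ad(k^{-1})H$ as $m$ centralizes $\liea$, and applying $\Ad(k)$ yields $\Ad(kmk^{-1})H=H$. Thus $kmk^{-1}\in Z_K(\liea)\cap N_K(\lieb)=N_M(\lieb)$, so $N_M(\lieb)\triangleleft N_K(\lieh)$ and therefore $W(\liem\colon\lieb)\triangleleft\tilde W$.

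For the quotient I would use the restriction-to-$\liea$ map. The inclusion $N_K(\lieh)\hookrightarrow N_K(\liea)$ followed by the natural surjection $N_K(\liea)\twoheadrightarrow N_K(\liea)/M\cong\rW$ has kernel $N_K(\lieh)\cap M=N_M(\lieb)$, giving the asserted natural injection $\tilde W/W(\liem\colon\lieb)\hookrightarrow\rW$. Surjectivity is the one step with real content, and I expect it to be the main (indeed only) obstacle: given $k\in N_K(\liea)$, the subalgebra $\Ad(k^{-1})\lieb$ is again maximal Abelian in $\liem$ because $k$ normalizes $\liem=Z_\liek(\liea)$, so by conjugacy of maximal Abelian subalgebras of the compact Lie algebra $\liem$ under $M^\circ$ there is $m\in M$ with $\Ad(m)\lieb=\Ad(k^{-1})\lieb$; then $\Ad(km)\lieb=\lieb$, so $km\in N_K(\liea)\cap N_K(\lieb)=N_K(\lieh)$ while $km$ and $k$ represent the same class in $N_K(\liea)/M$. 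This produces the isomorphism $\tilde W/W(\liem\colon\lieb)\cong\rW$. The subtlety to be careful about is that $M$ need not be connected, so one must phrase the conjugacy at the level of subalgebras (equivalently, under $M^\circ$) rather than conjugating the torus $\exp\lieb$ inside $M$ directly.
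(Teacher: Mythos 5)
Your proof is correct and follows essentially the same route as the paper: normality and the embedding into $\rW$ come from $N_{M}(\lieb)=N_{K}(\lieh)\cap Z_{K}(\liea)$ together with $N_{K}(\lieh)\subset N_{K}(\liea)$, and surjectivity is obtained by transporting $\lieb$ by a representative of a Weyl class and using conjugacy of maximal Abelian subalgebras of $\liem$ under $M$. Your explicit remark that the conjugacy should be phrased at the level of subalgebras (under $M^{\circ}$) because $M$ need not be connected is a slightly more careful rendering of the same step the paper performs.
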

\begin{proof}
That $N_{M}(\lieb)=N_{K}(\lieh)\cap Z_{K}(\liea)$ gives the first
assertion, and shows that the quotient embeds in $\rW$ since $N_{K}(\lieh)\subset N_{K}(\liea)$.
To show that the embedding is surjective let $w\in N_{K}(\liea)$
and consider $\Ad(w)\lieb$. This is the Lie algebra of a maximal
torus of $M$ ($\Ad(w)$ is an automorphism of $M$), hence conjugate
to $\lieb$ in $M$. In other words, there exists $m\in M$ such that
$\Ad(w)\lieb=\Ad(m)\lieb$ and hence $m^{-1}w\in N_{K}(\lieb)$. This
element also normalizes $\liea$, and hence $wM\in W$ has a representative
in $N_{K}(\lieh)$.\end{proof}
\begin{cor}
\label{cor: complex}Under the identification $\acs\isom\left\{ \nu\in\lieh_{\C}^{*}\mid\nu\upharpoonright_{\lieb}\equiv0\right\} $
(dual to the identification $\liea\isom\lieh/\lieb$) the group $\tilde{W}\subset\CW$
acts on $\acs$ via its quotient map to $W$.
\end{cor}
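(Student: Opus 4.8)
The plan is to deduce the corollary directly from the preceding Lemma by unwinding the various identifications. I would establish three things in turn: (i) that $\tilde W$ preserves the subspace $\acs \isom \{\nu \in \lieh_\C^* \mid \nu\restrict_\lieb \equiv 0\}$ of $\lieh_\C^*$, so that the action of $\tilde W$ on $\acs$ is even well-defined; (ii) that the normal subgroup $W(\liem\colon\lieb) \isom N_M(\lieb)/Z_M(\lieb)$ of the Lemma acts trivially on $\acs$; and (iii) that the induced action of the quotient, identified with $\rW = W$ as in the Lemma, is the standard $W$-action on $\acs$. Granting (i)--(iii), the statement is immediate.

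For (i) I would invoke the fact, already used in the proof of the Lemma, that every $k \in N_K(\lieh)$ normalizes $\liea$ and $\lieb$ separately. Since $\Ad(k)$ then preserves $\lieb$, the coadjoint action of $k$ on $\lieh_\C^*$ carries the annihilator of $\lieb$ to itself, i.e.\ preserves $\acs$; explicitly, if $\nu\restrict_\lieb\equiv 0$ and $X \in \lieb$ then $(k\cdot\nu)(X)=\nu(\Ad(k^{-1})X)=0$ because $\Ad(k^{-1})X \in \Ad(k^{-1})\lieb = \lieb$. For (ii) I would observe that $N_M(\lieb) \subset M = Z_K(\liea)$, so for $m \in N_M(\lieb)$ the automorphism $\Ad(m)$ is the identity on $\liea$; as it also normalizes $\lieb$ it descends to the identity on $\lieh/\lieb \isom \liea$, and hence acts trivially on the annihilator $\acs$ of $\lieb$. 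By the Lemma, $W(\liem\colon\lieb)$ is precisely the kernel of the quotient map $\tilde W \to \rW$, so the $\tilde W$-action on $\acs$ factors through $\rW = W$.

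For (iii), I would note that a representative $k \in N_K(\lieh)$ of $w \in \tilde W$, normalizing $\liea$ and $\lieb$ separately, preserves the direct sum decomposition $\lieh = \liea \oplus \lieb$; hence its action on $\acs$ (the annihilator of $\lieb$, dual to $\liea \isom \lieh/\lieb$) is dual to the action of $\Ad(k)\restrict_\liea$ on $\liea$. But the latter is, by definition, the action on $\liea$ of the image of $k$ in $\rW = N_K(\liea)/Z_K(\liea) = W$, which is the standard Weyl action. I do not anticipate any genuine obstacle here: the only real content --- normality of $W(\liem\colon\lieb)$ in $\tilde W$ and the identification of the quotient with $W$ --- is supplied by the Lemma, and what remains is careful bookkeeping with the dual identification $\acs \isom (\lieh/\lieb)^{*}$. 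The one point to watch is keeping track of which subgroup normalizes which subalgebra, and that is precisely where the remark that elements of $N_K(\lieh)$ normalize $\liea$ and $\lieb$ separately does its work.
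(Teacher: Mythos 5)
Your proof is correct and follows exactly the route the paper intends: the paper states the corollary without proof as an immediate consequence of the Lemma, and your three steps (invariance of the annihilator of $\lieb$, triviality of the $W(\liem\colon\lieb)$-action on $\acs$ via $N_M(\lieb)\subset Z_K(\liea)$, and identification of the quotient action with the standard $W$-action through the embedding $N_K(\lieh)\subset N_K(\liea)$) are precisely the bookkeeping the paper leaves implicit. No gaps.
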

We let $U(\lieg_{\C})$ denote the universal enveloping algebra of
the complexification of $\lieg$ (and similarly $U(\liea_{\C})$,
$U(\lien_{\C})$ \ldots{}). In such an algebra we let $U(\lieg_{\C})^{\leq d}$
denote the subspace generated by all (non-commutative) monomials in
$\lieg_{\C}$ of degree at most $d$.

\subsection{Representation Theory}

For any continuous representation of $K$ on a Fréchet space $W$,
and $\tau\in\hat{K}$ we let $W_{\tau}$ denote the $\tau$-isotypical
subspace, and $W_{K}=\oplus_{\tau}W_{\tau}$ denote the (dense) subspace
of $K$-finite vectors. We let $\hat{W}_{K}=\prod_{\tau}W_{\tau}$
denote the completion of $W_{K}$ with respect to this decomposition.
This is the space of formal sums $\sum_{\tau}w_{\tau}$ where $w_{\tau}\in W_{\tau}$.
We endow $\hat{W}_{K}$ with the product topology, which is also the
topology of convergence component-wise.

We specifically set $V=\Vc$ with the right regular action of $K$
and let $\VK$ denote the space of $K$-finite vectors there. We also
have $\VK={L^{2}(\mk)}_{K}$; $\VKh$ can be identified with the algebraic
dual $\VK'$ via the pairing $\left(f,\sum_{\tau}\phi_{\tau}\right)\mapsto\sum_{\tau}\int_{\mk}f\cdot\phi_{\tau}$;
the product topology is the weak-{*} topology. We let $\varphi_{0}\in\VK$
denote the function everywhere equal to $1$.
\begin{defn}
For $\nu\in\acs$ let $G$ act by the right regular representation
on
\[
\mathcal{F}^{\nu}=\left\{ \varphi\in C^{\infty}(G)\vert\varphi(namg)=a^{\nu+\rho}\varphi(g)\right\} .
\]

This induces a $\gK$-module structure on the space of $K$-finite
vectors $\mathcal{F}_{K}^{\nu}$. By the Iwasawa decomposition the
restriction map $\mathcal{F}_{K}^{\nu}\to\VK$ is an isomorphism of
algebraic representations of $K$, giving us a model $(I_{\nu},V_{K})$
for $F_{K}^{\nu}$. Given $\Phi=\sum_{\tau}\phi_{\tau}\in\VKh$ and
$X\in\lieg$ we set $I_{\nu}(X)\Phi=\sum_{\tau}I_{\nu}(X)\phi_{\tau}$
(the $\tau'$-component of the sum only has contribution from $K$-types
appearing in the tensor product of $\tau'$ and the adjoint representation
of $K$ on $\lieg$). Let $\bar{\mathds{1}}$ denote the trivial representation
of $\gK$ where the complex number $z$ acts by multiplication by
$\bar{z}$. Let $\left(\bar{I}_{\nu},\VKh\right)=\left(I_{\nu},\VKh\right)\otimes\bar{\mathds{1}}$.\end{defn}
\begin{notation}
Let $\left(\mathcal{I}_{\nu},\mathcal{V}_{K}\right)$ denote the $\gK$
module $\left(I_{\nu}\otimes\bar{I}_{\nu},\VK\otimes\VKh\right)$.\end{notation}
\begin{fact}
(Induced Representations)
\begin{enumerate}
\item The pairing $\left(f,g\right)\mapsto\int_{\mk}fg$ is a $G$-invariant
pairing on $\mathcal{F}^{\nu}\otimes\mathcal{F}^{-\nu}$. Equivalently,
$\left(f,g\right)\mapsto\int_{M\backslash K}f\bar{g}$ is an invariant
Hermitian pairing between $(I_{\nu},V_{K})$ and $(I_{-\bar{\nu}},V_{K})$.
For $\nu\in i\ars$ (the \emph{unitary axis}) it follows that $(I_{\nu},V_{K})$
is unitarizable, its invariant Hermitian form given by the standard
pairing of $L^{2}(\mk)$.
\item The induced representation is irreducible for $\nu$ lying in an open
dense subset of $i\ars$.
\item Every irreducible spherical $(\mathfrak{g},K)$-module $(\pi,V_{\pi})$
can be realized as a quotient via an intertwining operator $R\colon(I_{\nu},V_{K})\to(\pi,V_{\pi})$,
for some $\nu\in\cC$.
\end{enumerate}
\end{fact}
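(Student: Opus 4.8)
\emph{Assertion (1): the invariant pairings.} This I would prove by direct computation. For $f\in\mathcal{F}^{\nu}$ and $h\in\mathcal{F}^{-\nu}$ the product satisfies $(fh)(nag)=a^{\nu+\rho}a^{-\nu+\rho}(fh)(g)=a^{2\rho}(fh)(g)$, so $fh$ lies in the space $\mathcal{F}^{\rho}$ of the Preliminaries; moreover $f$ and $h$ are left $M$-invariant as functions on $K$, hence so is $fh$, and therefore $\int_{K}(fh)(k)\,dk=\int_{\mk}(fh)(k)\,dk$ for compatibly normalized measures. The right $G$-invariance of $F\mapsto\int_{K}F(k)\,dk$ on $\mathcal{F}^{\rho}$, recorded in the structure-theory section, then gives $\int_{\mk}f(kg)h(kg)\,dk=\int_{\mk}f(k)h(k)\,dk$, which is exactly $G$-invariance of the pairing on $\mathcal{F}^{\nu}\otimes\mathcal{F}^{-\nu}$. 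The Hermitian reformulation follows because $h\mapsto\bar h$ carries $\mathcal{F}^{-\bar\nu}$ into $\mathcal{F}^{-\nu}$ ($\rho$ and $\Re\nu$ being real-valued on $\liea$, while $\overline{a^{i\Im\nu}}=a^{-i\Im\nu}$). For $\nu\in i\ars$ one has $-\bar\nu=\nu$, so the Hermitian form lives on $(I_{\nu},V_{K})$ itself; under the Iwasawa identification $\mathcal{F}^{\nu}_{K}\isom V_{K}$ it becomes the $L^{2}(\mk)$ inner product, hence is positive definite, and completing $V_{K}$ produces a unitary model of $I_{\nu}$.

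\emph{Assertion (2): generic irreducibility.} For $\nu\in i\ars$ the form of assertion (1) makes the admissible module $(I_{\nu},V_{K})$ semisimple, so it is irreducible precisely when $\End_{\lieg,K}(I_{\nu})=\C$. I would invoke Bruhat's irreducibility criterion: the commuting algebra is controlled by the normalized standard intertwining operators $\tilde{A}(\nu;w)$ with $w\in\Stab_{W}(\nu)$ --- the operators whose analytic continuation is studied in Section~\ref{sec: Int-bds} --- so $(I_{\nu},V_{K})$ is irreducible whenever $\nu$ is $W$-regular. The locus $\{\nu\in i\ars:\Stab_{W}(\nu)\neq\{1\}\}$ is the finite union of the fixed hyperplanes of the reflections in $W$, a proper closed subset, so its complement is open and dense. (Kostant has determined the exact reducibility set; only this weaker statement is needed.)

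\emph{Assertion (3): every spherical module is a spherical quotient.} Let $(\pi,V_{\pi})$ be irreducible spherical, with one-dimensional space of $K$-fixed vectors. By Harish-Chandra its infinitesimal character is $\chi_{\nu}$ for some $\nu\in\acs$; since every $W$-orbit in $\ars$ meets $\cC$ I may choose the representative with $\Re\nu\in\cC$, and this is the $\nu$ of the statement (cf.\ Theorem~\ref{mainthm: intops}). I would then appeal to the classification of irreducible spherical $(\lieg,K)$-modules: Casselman's subrepresentation theorem places $\pi$ inside an induced module, a Frobenius-reciprocity $K$-type count $\bigl(\Hom_{K}(\mathds{1},\Ind_{M}^{K}\sigma)=\Hom_{M}(\mathds{1},\sigma)\bigr)$ forces the inducing $M$-type to be trivial, and the Langlands quotient theorem identifies $\pi$ with the unique spherical composition factor of the spherical principal series $I_{\nu}$ (uniqueness being automatic since taking $K$-invariants is an exact functor and $\dim (I_{\nu})^{K}=1$). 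Since $\varphi_{0}$ is a cyclic vector of $I_{\nu}$ (a standard property of the spherical principal series with dominant parameter), any maximal proper submodule avoids $\varphi_{0}$, hence has non-spherical irreducible quotient; combined with uniqueness this shows the spherical factor is a genuine quotient, so the canonical map $R\colon(I_{\nu},V_{K})\to(\pi,V_{\pi})$ is onto with $R(\varphi_{0})\neq0$, and rescaling gives $\left\Vert R(\varphi_{0})\right\Vert_{V_{\pi}}=1$.

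\emph{Where the difficulty lies.} Assertions (1) and (2) are routine given the preliminaries and the intertwining operators constructed in Section~\ref{sec: Int-bds}. The real content is (3): converting an abstractly-given irreducible spherical module into an explicit quotient of a spherical principal series. This rests on non-elementary structure theory --- Casselman's subrepresentation theorem and the Langlands classification (equivalently, Kostant's analysis of the spherical principal series) --- which I would cite rather than reprove; the only points to verify directly are the $K$-type bookkeeping pinning the inducing data down to the Borel with trivial $M$-type, and the cyclicity and position (top, not middle of a composition series) of the spherical vector $\varphi_{0}$ in $I_{\nu}$.
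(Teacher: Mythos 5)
This item is stated in the paper as a background \emph{Fact} with no proof at all (it is quoted as standard, in the same spirit as Fact \ref{fac: Int-ops}, whose parts are attributed to Knapp--Stein, Knapp's book and Arthur), so there is no internal argument to compare against; what you have done is supply the standard textbook proofs, and your sketch is essentially correct. Part (1) is exactly the usual computation: $fh\in\mathcal{F}^{\rho}$ and the right-$G$-invariance of $F\mapsto\int_{K}F(k)\,dk$ recorded in the preliminaries, plus the observation that conjugation carries $\mathcal{F}^{-\bar\nu}$ to $\mathcal{F}^{-\nu}$; part (2) via Bruhat's bound on the commutant of the unitarily induced representation (irreducibility at $W$-regular $\nu\in i\ars$) indeed gives an open dense set, even though Kostant's sharper theorem gives all of $i\ars$. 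In part (3) the overall route (Casselman subrepresentation theorem, the Frobenius-reciprocity count $\Hom_{K}(\mathds{1},\Ind_{M}^{K}\sigma)=\Hom_{M}(\mathds{1},\sigma)$ forcing trivial $M$-type, uniqueness of the spherical constituent because $\dim I_{\nu}^{K}=1$ and $K$-invariants are exact, and cyclicity of $\varphi_{0}$ at dominant parameter --- Kostant's theorem, consistent with the paper's convention in which the quotient realization occurs at dominant $\Re\nu$, as the $\SL_{2}$ computation of Section \ref{sub:SL2-example} illustrates) is the standard one and does yield the claim. Two points should be tightened: the clause ``any maximal proper submodule avoids $\varphi_{0}$, hence has non-spherical irreducible quotient'' is garbled --- what you want is that a maximal proper submodule $S$ contains no $K$-fixed vector, so the irreducible quotient $I_{\nu}/S$ contains the nonzero image of $\varphi_{0}$ and is therefore the (unique) spherical constituent; and the dominant parameter $\nu$ should be taken as the representative in the $W(\lieg:\liea)$-orbit of the exponent furnished by the Casselman embedding (the modules $I_{w\lambda}$, $w\in W$, having identical composition factors), rather than read off from the infinitesimal character alone, since the Harish-Chandra parameter is only determined up to the larger group $W(\lieg_{\C}:\lieh_{\C})$ acting on $\lieh_{\C}^{*}$. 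With those repairs your argument is a faithful reconstruction of the literature proof the paper implicitly invokes.
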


\subsection{Intertwining Operators}

Given $w\in W$ and $\nu\in\acs$, we can uniquely extend any $\varphi\in V_{K}$
to an element of $\cF^{\nu}$ (also denoted $\varphi)$. For $\nu\in\oCw$
we can then define an endomorphism $A(\nu;w)$ of $V_{K}$ by 
\[
\left(A(\nu;w)\varphi\right)(k)=\int_{\bar{N}\cap wNw^{-1}}\varphi(\bar{n}wk)d\bar{n}
\]
(the integral converges absolutely in this case). It is easy to check
that this operator intertwines the representations $(I_{\nu},V_{K})$
and $(I_{w\nu},V_{K})$ and is holomorphic in the domain $\Omega_{w}$.
\begin{fact}
\label{fac: Int-ops}(Intertwining operators)\end{fact}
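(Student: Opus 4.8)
\emph{Proof idea.} The items collected in this Fact are the classical Knapp--Stein facts about the (generally non-unitary) spherical principal series, and the plan is to obtain them by reduction to rank one in the usual way. First I would fix, for each $w\in W$, a reduced expression $w=s_{\beta_1}\cdots s_{\beta_\ell}$ in simple reflections with $\ell=\#\Phi_w$; the compatible factorization of $\bar N\cap wNw^{-1}$ into one-step pieces then yields the multiplicativity identity
\[
A(\nu;w)=A(s_{\beta_2}\cdots s_{\beta_\ell}\nu;\,s_{\beta_1})\circ\cdots\circ A(s_{\beta_\ell}\nu;\,s_{\beta_{\ell-1}})\circ A(\nu;\,s_{\beta_\ell}),
\]
valid initially for $\nu\in\Omega_w$ (where all integrals converge absolutely, as already noted), expressing $A(\nu;w)$ as a composite of rank-one operators attached to the reduced roots $\beta_j\in\Dr$. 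Thus it suffices to control a single $A(\nu;s_\beta)$, i.e.\ an integral over the rank-one subgroup generated by $\pm\beta$.

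In that rank-one situation (a real form locally isomorphic to $\SL_2(\R)$ in the simplest case, and otherwise to one of $\SL_2(\C)$, $\Sp(n,1)$, or $F_{4(-20)}$) I would compute the action of $A(\nu;s_\beta)$ on each $K$-isotypic line by the classical Beta-integral; the outcome is an explicit quotient of $\Gamma$-functions whose arguments are affine in $\nu_\beta$ with shifts depending on the $K$-type $\tau$ and on $p_\beta,q_\beta$. This yields at once (i) meromorphic continuation of $A(\nu;w)$ to all of $\acs$, with poles along a locally finite family of hyperplanes $\{\nu_\beta\in\Z : \beta\in\Phi_w\}$; and (ii), upon evaluating at the constant function $\varphi_0$ (which spans the $M$-fixed trivial $K$-type), the Gindikin--Karpelevich formula $A(\nu;w)\varphi_0=c_w(\nu)\varphi_0$ with $c_w(\nu)=\prod_{\beta\in\Phi_w}c_\beta(\nu_\beta)$ a product of $\Gamma$-ratios. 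One then sets $\tilde A(\nu;w)=c_w(\nu)^{-1}A(\nu;w)$, so that $\tilde A(\nu;w)\varphi_0=\varphi_0$ and the cocycle relation $\tilde A(\nu;w_1w_2)=\tilde A(w_2\nu;w_1)\tilde A(\nu;w_2)$ holds whenever $\#\Phi_{w_1w_2}=\#\Phi_{w_1}+\#\Phi_{w_2}$.

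The rationality of $K$-finite matrix coefficients then falls out: after normalization, the scalar by which $\tilde A(\nu;s_\beta)$ acts on a fixed $K$-type $\tau$ is a quotient of two finite products of $\Gamma$-factors whose arguments differ by \emph{integers}, hence a rational function of $\nu$ by $\Gamma(z+1)=z\Gamma(z)$; composing along the reduced word, every $\langle\tilde A(\nu;w)\phi_\tau,\phi_{\tau'}\rangle$ is rational in $\nu$ with poles confined to $\{\nu_\beta\in\Z : \beta\in\Phi_w\}$. Independence of the chosen reduced expression follows from the braid relations (equivalently, from the observation that on a generic irreducible $I_\nu$ two normalized intertwiners agreeing on $\varphi_0$ must coincide, and then from meromorphic continuation); and the remaining invertibility, together with unitarity on $\nu\in i\ars$, comes from the standard longest-element computation identifying $\tilde A(w_0\nu;w_0^{-1})\,\tilde A(\nu;w_0)$ with the identity up to the Plancherel density.

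The only genuinely computational step is the rank-one Beta-integral and the bookkeeping of the $\Gamma$-shifts across $K$-types; I expect that, rather than any conceptual difficulty, to be the main labour. Everything else is formal once multiplicativity along a reduced expression and its independence of the chosen expression are in hand, and all of this is contained in the work of Knapp and Stein on intertwining operators for semisimple groups and in Knapp's book on the representation theory of semisimple Lie groups, to which I would refer for the details.
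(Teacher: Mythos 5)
This item is stated in the paper as a \emph{Fact}: it is quoted from the literature (Knapp--Stein, Knapp's book, Arthur) with no proof beyond the short remark about the normalization $A(\nu;w)\varphi_0=r(\nu;w)\varphi_0$ and the non-vanishing of $r(\nu;w)$ on $\Omega_w$. Your sketch of the rank-one reduction, the Gindikin--Karpelevich evaluation on $\varphi_0$, and the rationality of normalized $K$-finite matrix coefficients via integer $\Gamma$-shifts is the standard route to items (1), (2) and (5) and is consistent with the sources the paper cites; the only caveats there are minor (in the higher rank-one groups the operator need not act by a scalar on a $K$-isotypic subspace, since the multiplicity of a $K$-type in the spherical principal series can exceed one, so the ``Beta-integral bookkeeping'' is an endomorphism-valued computation, and rationality in the stated generality is precisely Arthur's theorem rather than something that simply ``falls out'').

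The genuine gap is that your proposal does not address items (3) and (4) at all: the assertion that every unitarizable spherical $(\lieg,K)$-module is a quotient of some $I_\nu$ with an involution $w\in W$ satisfying $w\nu=-\bar\nu$ and with $\Re(\nu)$ confined to a fixed compact set; and the converse statement that $\left(f,g\right)\mapsto\left\langle A(\nu;w)f,g\right\rangle_{L^{2}(K)}$ is a non-zero $(\lieg,K)$-equivariant Hermitian pairing whose radical is $\Ker A(\nu;w)$, with irreducible quotient, unitarizable exactly when the pairing is semidefinite, and that this exhausts the spherical unitary dual. These are not consequences of the rank-one integral computations or of multiplicativity along a reduced word; they rest on the Hermitian duality $I_\nu^{h}\isom I_{-\bar\nu}$, the uniqueness of the irreducible spherical subquotient, and the Langlands quotient theory (Knapp, Ch.~XVI). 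They are also precisely the parts of the Fact that the paper actually uses in the proof of Theorem \ref{mainthm: intops} (to produce the involution $w$ and to identify $\left\Vert R(\varphi)\right\Vert_{V_\pi}^{2}$ with $\left\langle \tilde A(\nu;w)\varphi,\varphi\right\rangle_{L^{2}(K)}$), so a proof sketch of this Fact that omits them is incomplete; you should either supply the Hermitian-duality and Langlands-classification arguments or cite them explicitly as separate inputs.
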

\begin{enumerate}
\item \cite[Prop.\ 60(i)]{KnappStein:IntOps} The operators $A(\nu;w)$
admit a meromorphic continuation to all of $\mathfrak{a}_{\C}^{*}$,
intertwining the representations $(I_{\nu},V_{K})$ and $(I_{w\nu},V_{K})$.
For $\nu\in i\ars$ they are unitary operators.
\item \cite[{\S}VII.5]{Knapp:Rpn_Th_SS_Gps} For $\nu\in\mathfrak{a}_{\C}^{*}$
and $\beta\in\Delta$ set $\nu_{\beta}=\frac{2\left\langle \nu,\beta\right\rangle }{\left\langle \beta,\beta\right\rangle }$.
For $w\in W$ set $\Phi_{w}=\left\{ \beta\in\Delta\setminus2\Delta\vert\beta\in\Delta^{+}\cap w^{-1}\Delta^{-}\right\} $.
Then $A(\nu;w)\varphi_{0}=r(\nu;w)\varphi_{0}$ where 
\[
r(\nu;w)=\prod_{\beta\in\Phi_{w}}\left[\frac{\Gamma(p_{\beta}+q_{\beta})}{\Gamma(\frac{1}{2}(p_{\beta}+q_{\beta}))}\frac{\Gamma(\frac{1}{2}\nu_{\beta})}{\Gamma(\frac{1}{2}(\nu_{\beta}+p_{\beta}))}\frac{\Gamma(\frac{1}{4}(\nu_{\beta}+p_{\beta}))}{\Gamma(\frac{1}{4}(\nu_{\beta}+p_{\beta})+\frac{1}{2}q_{\beta})}\right].
\]
We set $\tilde{A}(\nu;w)=r^{-1}(\nu;w)A(\nu;w)$.
\item \cite[Ch.\ XVI]{Knapp:Rpn_Th_SS_Gps}If the spherical representation
$(\pi,V_{\pi})$ is unitarizable and realized as a quotient of $(I_{\nu},V_{K})$
as before, these exists $w\in W$ with $w^{2}=1$ such that $w\nu=-\bar{\nu}$;
further more $\Re(\nu)$ belongs to a fixed compact set.
\item Conversely, let $w\in W$ satisfy $w^{2}=1$, and let $\nu\in\acs$
such that $w\nu=-\bar{\nu}$. Then 
\[
\left(f,g\right)\mapsto\left\langle A(\nu;w)f,g\right\rangle _{L^{2}(K)}
\]
defines a non-zero $(\mathfrak{g},K)$-equivariant Hermitian pairing
on $(I_{\nu},V_{K})$; the subspace where the pairing vanishes is
the kernel of $A(\nu;w)$ and the quotient is irreducible. The quotient
is unitarizable iff the pairing is semidefinite, and every unitary
spherical representation arises this way.
\item \cite{Arthur:IntOpsI} For fixed $\varphi,\psi\in V_{K}$ the matrix
coefficient 
\[
\nu\mapsto\left\langle \tilde{A}(\nu;w)\varphi,\psi\right\rangle _{L^{2}(K)}
\]
 is a rational function of $\nu$ where we identify $\acs$ with $\C^{\dim\liea}$
via the map $\nu\mapsto(\nu(H_{\alpha}))_{\alpha\in\Pi}$.\end{enumerate}
\begin{rem}
Since $V_{K}$ contains a unique copy of the trivial representation
of $K$, we must have $A(\nu;w)\varphi_{0}=r(\nu;w)\varphi_{0}$ for
some meromorphic function $r(\nu;w)$. Showing the integral defining
$r(\nu;w)$ converges absolutely for $\nu\in\oCw$ proves the absolute
convergence claim above.

Since $r(\nu;w)$ does not vanish in open domain $\Omega_{w}$, $\tilde{A}(\nu;w)$
cannot have zeroes or poles there.
\end{rem}

\section{\label{sec: Int-bds}Interpolation bounds on intertwining operators}
\begin{lem}
Let $f\in\C(z)$ be a rational function of one variable. Assume that
$f$ is bounded on the line $\Re(z)=0$ and has no poles to the right
of the line. Then 
\[
\sup\left\{ f(z)\mid\Re(z)\geq0\right\} =\sup\left\{ f(z)\mid\Re(z)=0\right\} .
\]
\end{lem}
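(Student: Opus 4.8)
The plan is to reduce the claim to a form of the Phragmén–Lindelöf principle, using the hypothesis that $f$ is rational to control growth at infinity. Write $M = \sup\{|f(z)| \mid \Re(z) = 0\}$, which is finite by assumption, and note that the inequality $\sup\{|f(z)| \mid \Re(z) \geq 0\} \geq M$ is trivial, so only the reverse inequality needs proof. Since $f$ is rational with no poles in the closed right half-plane $\{\Re(z) \geq 0\}$, it is holomorphic there, and moreover $\lim_{z \to \infty} f(z)$ exists in $\C \cup \{\infty\}$; but if that limit were $\infty$ then $f$ would have a pole at infinity, meaning $f$ is a non-constant polynomial, which is unbounded on the imaginary axis unless it is constant — contradicting boundedness on $\Re(z) = 0$ (a constant trivially satisfies the conclusion). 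Hence $f$ is bounded on all of $\{\Re(z) \geq 0\}$, say by some constant $C$.

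Now I would invoke the Phragmén–Lindelöf theorem for a half-plane: if $g$ is holomorphic on the open right half-plane, continuous up to the boundary, bounded there, and satisfies $|g(z)| \leq M$ on the imaginary axis, then $|g(z)| \leq M$ throughout $\{\Re(z) \geq 0\}$. Applying this with $g = f$ gives $\sup\{|f(z)| \mid \Re(z) \geq 0\} \leq M$, which is exactly the desired equality. The standard route to Phragmén–Lindelöf here is to consider, for $\varepsilon > 0$, the auxiliary function $f_\varepsilon(z) = f(z)/(1 + \varepsilon z)$, which still tends to $0$ at infinity in the right half-plane (since $f$ is bounded), so by the maximum modulus principle applied on a large half-disc one gets $|f_\varepsilon(z)| \leq M$ on the boundary contour, hence on the half-disc; letting the radius tend to infinity and then $\varepsilon \to 0$ recovers $|f(z)| \leq M$.

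The only genuine subtlety — and the step I'd treat most carefully — is the passage from "rational and pole-free on $\{\Re(z)\geq 0\}$, bounded on the axis" to "bounded on the whole closed half-plane," because Phragmén–Lindelöf in its sharpest forms requires an a priori growth bound and can fail without one (e.g. $e^{e^z}$ on a strip). Here that worry evaporates precisely because $f$ is rational: a rational function has a well-defined (finite or infinite) limit at $\infty$, and the boundedness on the imaginary axis rules out the infinite case, so $f$ is automatically bounded on $\{\Re(z) \geq 0\}$ and even extends continuously to $\infty$. Once that observation is in place, one may in fact bypass Phragmén–Lindelöf entirely: $f$ extends to a continuous function on the closed half-disc of the Riemann sphere $\{\Re(z) \geq 0\} \cup \{\infty\}$, which is compact, holomorphic on its interior, so the maximum modulus principle applies directly and the supremum is attained on the boundary $\{\Re(z) = 0\} \cup \{\infty\}$; and the value at $\infty$ is a limit of values on the axis, hence bounded by $M$. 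This gives the cleanest argument and is the one I would write up.
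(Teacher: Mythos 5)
Your proposal is correct and, in its final form, takes essentially the same route as the paper: the paper composes with a M\"obius transformation so that the point at infinity becomes a boundary point of the closed unit disk, uses rationality together with boundedness on the line to see that the singularity there is removable, and then applies the ordinary maximum principle --- which is exactly your Riemann-sphere argument (the Phragm\'en--Lindel\"of detour is just an alternative you discard). One harmless slip: a rational function with a pole at infinity need not be a polynomial (e.g.\ $(z^{2}+1)/z$), but your conclusion still stands because $\left|f(z)\right|\to\infty$ along the imaginary axis already contradicts boundedness on the line.
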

\begin{proof}
Composing with a Möbius transformation we may instead consider the
case of a rational function $f$ holomorphic in the interior of the
unit disk $\mathbb{D}$ and bounded on $\partial\mathbb{D}\setminus\left\{ 1\right\} $.
The singularity of $f$ at $z=1$ is at most a pole since $f$ is
rational. The boundedness on the rest of the boundary then shows the
singularity is removable so that $f$ is continuous on the closed
disk. Finally, apply the usual maximum principle.\end{proof}
\begin{thm}
Let $w\in W$, and let $\tilde{A}(\nu;w)\colon\left(I_{\nu},V_{K}\right)\to\left(I_{w\nu},V_{K}\right)$
be the intertwining operator, normalized such that $\tilde{A}(\nu;w)\varphi_{0}=\varphi_{0}$.
Then $\left\Vert \tilde{A}(\nu;w)\right\Vert _{L^{2}(K)}\leq1$ for
$\nu\in\bar{\Omega}_{w}$.\end{thm}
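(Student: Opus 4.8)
The plan is to reduce the operator norm bound to a statement about a single scalar-valued rational function and then invoke the preceding one-variable Lemma. First I would fix $K$-types $\tau, \tau' \in \hat K$ and consider the finite-dimensional block $\tilde A(\nu;w)_{\tau',\tau}\colon (V_K)_\tau \to (V_K)_{\tau'}$; since $V_K$ decomposes discretely under $K$ and $\tilde A(\nu;w)$ is $K$-equivariant, it suffices to bound each such block, and in fact it suffices to bound the full operator on the orthogonal complement of $\varphi_0$ together with the normalization $\tilde A(\nu;w)\varphi_0 = \varphi_0$. To control the operator norm I would use the standard device of testing against unit vectors: for $\varphi,\psi \in V_K$ with $\norm{\varphi}_{L^2(K)} = \norm{\psi}_{L^2(K)} = 1$, the scalar function $g_{\varphi,\psi}(\nu) = \langle \tilde A(\nu;w)\varphi, \psi\rangle_{L^2(K)}$ is, by item (5) of Fact \ref{fac: Int-ops} (Arthur's rationality), a rational function of $\nu \in \acs \isom \C^{\dim\liea}$.

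The heart of the argument is then an induction on the length of $w$, or equivalently a reduction to the rank-one case, using the cocycle identity $\tilde A(\nu; w_1 w_2) = \tilde A(w_2\nu; w_1)\tilde A(\nu; w_2)$ valid whenever $\ell(w_1 w_2) = \ell(w_1) + \ell(w_2)$, so that it is enough to treat $w = s_\beta$ a simple reflection. In that case $\bar\Omega_w$ is the half-space $\Re(\nu_\beta) \ge 0$, and the operator $\tilde A(\nu; s_\beta)$ depends only on the single complex variable $z = \nu_\beta$ (the directions orthogonal to $\beta$ act trivially); moreover on the unitary axis $\Re(z) = 0$ the operator $A(\nu; s_\beta)$, hence $\tilde A(\nu; s_\beta)$ (as $r(\nu; s_\beta)$ has modulus $1$ there), is unitary by item (1) of Fact \ref{fac: Int-ops}. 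Thus for fixed unit vectors $\varphi,\psi$ the rational function $z \mapsto g_{\varphi,\psi}(z)$ is bounded by $1$ on $\Re(z) = 0$, and by the Remark following Fact \ref{fac: Int-ops} it has no poles in the open half-space $\Re(z) > 0$ (since $r(\nu;s_\beta)$ is pole- and zero-free on $\Omega_w$ and $A(\nu;s_\beta)$ is holomorphic there). The one-variable Lemma now gives $\abs{g_{\varphi,\psi}(z)} \le 1$ for all $\Re(z) \ge 0$; taking the supremum over unit $\varphi,\psi$ yields $\norm{\tilde A(\nu;s_\beta)}_{L^2(K)} \le 1$ on $\bar\Omega_{s_\beta}$, and the general case follows by composing along a reduced word and noting $w\bar\Omega_w$-type containments are arranged so each factor is evaluated in the appropriate closed domain.

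The main obstacle I anticipate is the bookkeeping in the inductive step: one must verify that when $w = s_{\beta_1}\cdots s_{\beta_\ell}$ is a reduced expression and $\nu \in \bar\Omega_w$, each partial product $\tilde A(\,\cdot\,; s_{\beta_j})$ in the factorization is being evaluated at a parameter lying in the closed domain $\bar\Omega_{s_{\beta_j}}$ where the rank-one bound applies — this is the familiar fact that $\Phi_w$ is the disjoint union of the "shifted" simple roots along a reduced word, so positivity of $\nu$ on $\Phi_w$ propagates correctly, but it needs to be stated carefully. A secondary technical point is that $\tilde A(\nu;w)$ a priori acts on $V_K$, not on $L^2(K)$; the norm bound on $K$-finite vectors together with $K$-equivariance shows it extends to a bounded operator of norm $\le 1$ on the Hilbert space completion, which is what the statement asserts. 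The boundary behaviour (passing from the open $\Omega_w$ to its closure $\bar\Omega_w$) is handled entirely inside the one-variable Lemma, where the possible pole on $\partial\mathbb{D}$ is shown to be removable, so no separate continuity argument on $\acs$ is required.
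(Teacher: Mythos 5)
Your argument is correct, but it is not the route the paper takes, and it imports more machinery. The paper's proof works with general $w$ all at once: fixing $\varphi,\psi$, it restricts the normalized matrix coefficient to the complex line $z\mapsto\nu_{z}=i\Im(\nu)+z\Re(\nu)$, which is a rational function of $z$ by Arthur's rationality (this is exactly where the multi-variable statement of Fact (5) is needed), has no poles for $\Re(z)>0$ because $\nu_{z}\in\Omega_{w}$ there, and is bounded by $1$ on $\Re(z)=0$ because at those points the \emph{full} parameter $\nu_{z}$ lies in $i\ars$, so $\tilde{A}(\nu_{z};w)$ is unitary; the one-variable Lemma then gives the bound on $\Omega_{w}$, and meromorphy handles the passage to $\bar{\Omega}_{w}$. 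Your reduction to simple reflections instead needs two standard inputs that are not in the paper's stated toolkit: the product formula $\tilde{A}(\nu;w_{1}w_{2})=\tilde{A}(w_{2}\nu;w_{1})\tilde{A}(\nu;w_{2})$ for reduced products (together with the multiplicativity of $r$), and the fact that the rank-one operator $A(\nu;s_{\beta})$ depends on $\nu$ only through $\nu_{\beta}$ --- the latter is essential, since Fact (1) gives unitarity only for $\nu\in i\ars$, and without it your boundary line $\Re(\nu_{\beta})=0$ contains non-unitary full parameters; you also need the reduced-word combinatorics of $\Phi_{w}$ to check each factor sits in its closed domain, as you note. Both inputs are classical (Knapp, Ch.\ VII), so your proof goes through; what it buys is a more structural argument in which unitarity and the maximum principle are only ever used for rank-one operators (where Arthur's rationality could even be replaced by the explicit Gamma-factor formulas, as in the $\SL_{2}(\R)$ example), while the paper's choice of the line $i\Im(\nu)+z\Re(\nu)$ buys a shorter, induction-free proof that never leaves the group $G$ itself.
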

\begin{proof}
By duality, it suffices to show that 
\[
\left\langle \tilde{A}(\nu;w)\varphi,\psi\right\rangle _{L^{2}(K)}\leq\left\Vert \varphi\right\Vert _{L^{2}(K)}\left\Vert \psi\right\Vert _{L^{2}(K)}
\]
holds for all non-zero $\varphi,\psi\in V_{K}$ and all $\nu$ as
above. As the left-hand-side is a meromorphic function of $\nu$,
it suffices to establish the inequality for $\Re(\nu)\in\oCw$, which
we assume henceforth.

We restrict the left-hand-side to a one-parameter family of spectral
parameters by considering the meromorphic one-variable function 
\[
f(z)=\frac{1}{\left\Vert \varphi\right\Vert _{L^{2}(K)}\left\Vert \psi\right\Vert _{L^{2}(K)}}\left\langle \tilde{A}(i\Im(\nu)+z\Re(\nu);w)\varphi,\psi\right\rangle _{L^{2}(K)}.
\]
It will be convenient to write $\nu_{z}=i\Im(\nu)+z\Re(\nu)$ so that
$\nu_{1}=\nu$, and note that the parameters in our family satisfy
$\Re(\nu_{z})=\Re(z)\Re(\nu)$ and in particular $\Re(\nu_{z})\in\oCw$
when $\Re(z)>0$. Arthur's result quoted above (Fact \ref{fac: Int-ops}(4))
is that $f(z)$ is a rational function of $z$. It has no poles in
the domain $\Re(z)>0$ since the intertwining operator has no poles
in $\Omega_{w}$. When $z=it\in i\R$, the parameter $\nu_{z}\in i\ars$
is unitary and hence $\tilde{A}(\nu_{z};w)$ is a unitary operator,
which implies $\left|f(z)\right|\leq1$ by Cauchy-Schwartz. In particular,
$f$ has no poles on this line, and the claim now follows from the
Lemma.
\end{proof}

\paragraph*{Proof of Theorem \ref{mainthm: intops}}

Let $\left(\pi,V_{\pi}\right)\in\hat{G}$ be spherical, and let $R\colon\left(I_{\nu},V_{K}\right)\to\left(\pi,V_{\pi}\right)$
be a non-zero intertwining operator with the real part of $\nu\in\acs$
in the closed positive chamber $\cC$, normalized such that $\left\Vert R(\varphi_{0})\right\Vert _{V_{\pi}}=1$.

By Fact \ref{fac: Int-ops}(3) there exists an involution $w\in W$
such that $w\nu=-\bar{\nu}$ and such that $\left\langle \varphi,A(\nu;w)\psi\right\rangle _{L^{2}(K)}$
is a $G$-equivariant Hermitian pairing on $\left(\mathcal{I}_{\nu},V_{K}\right)$.
Also, the image of $A(w,\nu)$ is irreducible (in fact, isomorphic
to $\pi$). By Schur's Lemma there is $c\geq0$ such that for all
$K$-finite $\varphi$ we have $\left\Vert R(\varphi)\right\Vert _{V_{\pi}}=c\left\langle \tilde{A}(\nu;w)\varphi,\varphi\right\rangle _{L^{2}(K)}$
. Our normalization implies that the constant of proportionality is
$1$, and the bound on the intertwining operator gives the claim $\left\Vert R(\varphi)\right\Vert _{V_{\pi}}\leq\left\Vert \varphi\right\Vert _{L^{2}(K)}$.

\subsection{\label{sub:SL2-example}Example: $\SL_{2}(\R)$}

Let $G=\SL_{2}(\R)$, $K=\SO_{2}(\R)$. The Lie algebra $\lieg=\sl_{2}(\R)$
is spanned by the three elements $H=\left(\begin{array}{cc}
1\\
 & -1
\end{array}\right)$, $X=\left(\begin{array}{cc}
0 & 1\\
 & 0
\end{array}\right)$ and $\bar{X}=\left(\begin{array}{cc}
0\\
1 & 0
\end{array}\right)$. Individually they span the subalgebras $\liea=\R H$, $\lien=\R X$
and $\lienb=\R\bar{X}$. These are the Lie algebras of the subgroups
$A=\left\{ \left(\begin{array}{cc}
*\\
 & *
\end{array}\right)\right\} $, $N=\left\{ \left(\begin{array}{cc}
1 & x\\
 & 1
\end{array}\right)\right\} $, $\bar{N}=\left\{ \left(\begin{array}{cc}
1\\
* & 1
\end{array}\right)\right\} $. We shall also use $M=Z_{K}(A)=\left\{ \pm I\right\} $ and fix $w=\left(\begin{array}{cc}
 & 1\\
-1
\end{array}\right)$, a representative for the non-trivial class in $W(\lieg:\liea)\isom N_{K}(A)/Z_{K}(A)$.
Letting $k_{\phi}=\left(\begin{array}{cc}
\cos\phi & \sin\phi\\
-\sin\phi & \cos\phi
\end{array}\right)$ so that $K=\left\{ k_{\phi}\right\} _{\phi\in\R/2\pi\Z}$, we normalize
the Haar measures on the circles $K$ and $\mk$ to be probability
measures, on $\bar{N}$ to be $\frac{1}{\pi}du$ where $\bar{n}(u)=\exp(u\bar{X})$.

As $\left[H,X\right]=2X$, we have $[tH,X]=\alpha(tH)X$ for that
$\alpha\in\mathfrak{a}_{\R}^{*}$ (the {}``positive root'') given
by $\alpha(tH)=2t$. We then set $\rho(tH)=\frac{1}{2}\alpha(tH)=t$
({}``half the sum of the positive roots''). We can then identify
the complex dual $\mathfrak{a}_{\C}^{*}$ with $\C$ via $z\mapsto(tH\mapsto zt)$.

The induced representation $\mathscr{P}^{+,z}$ (cf \cite[\S\S2.5 \& 7.1]{Knapp:Rpn_Th_SS_Gps})
is the right regular representation of $G$ on the space
\[
\mathcal{F}^{+,z}=\left\{ F\in C^{\infty}(G)\mid F(n\exp(tH)mg)=e^{(z+1)t}F(g)\right\} .
\]
By the Iwasawa decomposition these functions are uniquely determined
by their restriction to the space $V=C(\mk)$. the space of even functions
on the circle. As usual we shall restrict our attention to the subspace
$V_{K}\subset V$ of even trigonometric polynomials (the {}``$K$-finite''
vectors), which is spanned by the Fourier modes $\varphi_{2m}(\theta)=\exp(2mi\theta)$.

As we will see shortly, for $\Re(z)>0$ and $F\in\mathcal{F}^{+,z}$
the integral $(AF)(g)=\int_{\bar{N}}F(\bar{n}wg)d\bar{n}$ converges
absolutely. Assuming this, we now verify that it defines an element
of $\mathcal{F}^{+,-z}$. It also clearly intertwines the right regular
representations under consideration.

For $a=\exp(tH)\in A$ we note that $waw^{-1}=a^{-1}$ and that for
$\bar{n}'=a\bar{n}a^{-1}\in\bar{N}$ we have $d\bar{n}'=e^{-2t}d\bar{n}$.
From this we conclude:

\[
AF(ag)=\int_{\bar{N}}F(\bar{n}wag)d\bar{n}=e^{(-z+1)t}\int_{\bar{N}}F(\bar{n}'wg)d\bar{n}'.
\]
Similarly we note that for $n\in N$, $wnw^{-1}\in\bar{N}$. Since
we are integrating \wrt to a Haar measure on $\bar{N}$, this shows
that $AF(ng)=AF(g)$. Finally, since $M$ is central it is clear that
$AF(mg)=AF(g)$. The smoothness is clear by differentiating under
the integral sign, and the operator preserves $K$-finiteness since
it commutes with the action of $K$.

Given $u\in\R$ we set $t=-\frac{1}{2}\log(1+u^{2})$ and define $\theta\in(0,\pi)$
by $u=\cot\theta$. Then there exists $n\in\R$ such that:
\[
\bar{n}(u)w=\left(\begin{array}{cc}
1 & n\\
 & 1
\end{array}\right)\exp(tH)k_{\theta}.
\]
Since $du=-\frac{d\theta}{\sin^{2}\theta}$ and $e^{t}=\left|\sin\theta\right|$
we get:
\begin{equation}
A(z)F(k_{\phi})=\frac{1}{\pi}\int_{0}^{\pi}\left|\sin\theta\right|^{z-1}F(k_{\phi+\theta})d\theta.\label{eq:intop-K-sl2}
\end{equation}
Since $F\restrict_{K}$ is even this is a convolution operator on
the circle $K$.

We can now address the question of convergence. Taking absolute values
and bounding $F(k_{\phi+\vartheta})$ by $\left\Vert F\right\Vert _{L^{\infty}(K)}$
it is clear that $A(z)$ converges absolutely for all $F\in C(\mk)$
iff the same holds for $A(\Re(z))\varphi_{0}$ where $\varphi_{0}$
is the constant function. When $F\restrict_{K}$ is one of the Fourier
modes $\varphi_{2m}$, we find on page 8 of \cite{MagnusOberhettingerSoni:SpecialFunc}
that the integral (\ref{eq:intop-K-sl2}) converges absolutely for
$\Re(z)>0$ (the {}``open positive Weyl chamber'') and takes the
value: 
\[
A\varphi_{2m}=(-1)^{m}\frac{2^{1-z}\Gamma(z)}{\Gamma(\frac{z+1}{2}+m)\Gamma(\frac{z+1}{2}-m)}\cdot\varphi_{2m}.
\]

We may thus extend $A(z)$ to a family of operators $A(z)\colon V_{K}\to V_{K}$
intertwining the induced representations and defined everywhere except
for the pole at $z=0$. We next normalize these operators. As above
we define $r(z)$ by $A(z)\varphi_{0}=r(z)\varphi_{0}$, that is:
\[
r(z)=\frac{2^{1-z}\Gamma(z)}{\Gamma(\frac{z+1}{2})^{2}}.
\]
Note that this meromorphic function has no zeroes or poles for $\Re(z)>0$.
In particular, if we set $\tilde{A}(z)F=r(z)^{-1}A(z)F$ the new operator
is also regular for $\Re(z)>0$ and extends meromorphically to $\C$.
It will now have poles for $\Re(z)<0$, but will be regular for $\Re(z)=0$.

The claim of Theorem \ref{mainthm: intops} (in this case) is that
$\tilde{A}(z)\colon V_{K}\to V_{K}$ is bounded in the $L^{2}$ norm.
Since it is diagonal in the Fourier basis it suffices to verify that
the Fourier coefficients $\tilde{A}(z)\varphi_{2m}=c_{2m}(z)\varphi_{2m}$
satisfy $\left|c_{2m}(z)\right|\leq1$ when $\Re(z)\geq0$.

For $m=0$ this is true by definition of $r(z)$. In general, using
$\Gamma(z+m)=\Gamma(z)\prod_{j=0}^{m-1}(z+j)$ we get:
\[
c=\frac{\Gamma(\frac{z+1}{2})^{2}}{\Gamma(\frac{z+1}{2}+m)\Gamma(\frac{z+1}{2}-m)}=\prod_{j=0}^{m-1}\frac{z-(2j+1)}{z+(2j+1)}.
\]
Now $z-(2j+1)$ and $z+(2j+1)$ always have the same imaginary part,
but for $\Re(z)\geq0$ the denominator always has a larger real part
(in absolute value), and the product has magnitude at most $1$ as
claimed.
\begin{rem}
The rationality of the matrix coefficient $c_{2m}(z)=\left(\tilde{A}(z)\varphi_{2m},\varphi_{2m}\right)_{L^{2}(\mk)}$
was an essential ingredient in our argument above.\end{rem}
\begin{cor}
The normalized operator has no poles (or zeroes) for $\Re(\nu)$ in
the \emph{closed} positive chamber.
\end{cor}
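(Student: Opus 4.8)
The plan is to read off both assertions -- that the normalized intertwining operator $\tilde{A}(\nu;w)$ has neither poles nor zeroes on the closed positive chamber $\bar{\Omega}=\{\nu:\Re(\nu)\in\cC\}$ -- from the norm bound just established (the Theorem of this section) together with Arthur's rationality of $K$-finite matrix coefficients (Fact~\ref{fac: Int-ops}(5)); in the $\SL_{2}(\R)$ case they also fall straight out of the explicit eigenvalues $c_{2m}(z)$ computed above. I would dispose of the poles first, then the zeroes.

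To rule out poles, fix $\varphi,\psi\in V_{K}$ and consider $\nu\mapsto\langle\tilde{A}(\nu;w)\varphi,\psi\rangle_{L^{2}(K)}$, which is a rational function of $\nu$ by Fact~\ref{fac: Int-ops}(5). The one structural point to record is that $\bar{\Omega}\subseteq\bar{\Omega}_{w}$ for every $w\in W$: since $\Phi_{w}\subseteq\Delta^{+}$, positivity of $\nu_{\beta}$ on the simple roots already forces it on $\Phi_{w}$, i.e. $\opC\subseteq\oCw$. On $\bar{\Omega}_{w}$ the Theorem together with Cauchy--Schwarz bounds our matrix coefficient by $\|\varphi\|_{L^{2}(K)}\|\psi\|_{L^{2}(K)}$, so it is a rational function bounded on the closed tube $\bar{\Omega}$. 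I would then invoke the elementary fact that a rational function bounded on the closure of a non-empty open set has no pole there -- approaching a hypothetical boundary pole along a generic real half-line into the open cone forces the function to be unbounded. Hence every matrix coefficient of $\tilde{A}(\nu;w)$ is holomorphic on a neighbourhood of $\bar{\Omega}$, which is exactly ``no poles on the closed positive chamber'' in the $K$-finite picture. In the $\SL_{2}$ model this is merely the observation that the poles of $c_{2m}(z)=(-1)^{m}\prod_{j=0}^{m-1}\frac{z-(2j+1)}{z+(2j+1)}$ all lie in $\Re(z)<0$, the common pole of $A(z)$ and of $r(z)$ at $z=0$ cancelling in $\tilde{A}=r^{-1}A$.

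To rule out zeroes, I would use that the normalization fixes the spherical vector, $\tilde{A}(\nu;w)\varphi_{0}=\varphi_{0}$, so $\tilde{A}(\nu;w)$ is a non-zero operator at every $\nu$ at which it is defined and in particular never degenerates to $0$ on $\bar{\Omega}$; equivalently, the zeroes of the normalizing scalar $r(\nu;w)$ -- visible from the Gamma-factor product in Fact~\ref{fac: Int-ops}(2) (in $\SL_{2}$, $r(z)=2^{1-z}\Gamma(z)\Gamma(\frac{z+1}{2})^{-2}$ with zeroes only at $z\in\{-1,-3,\dots\}$) -- occur only where some $\nu_{\beta}$, $\beta\in\Phi_{w}$, is a negative real number, hence outside $\bar{\Omega}$, so $A(\nu;w)\mapsto r(\nu;w)^{-1}A(\nu;w)$ creates no spurious zero on the closed chamber.

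I do not anticipate a genuine obstacle: the analytic content -- propagating the unitary-axis bound to all of $\bar{\Omega}_{w}$ via the one-variable maximum principle -- is already carried out in the Theorem, and the Corollary just unpacks it. The one point that needs a little care is the behaviour on the walls $\partial\Omega$, where the defining integral for $A(\nu;w)$ no longer converges and one must work with the meromorphic continuation; but this is precisely the regime the closed-domain form of the Theorem already handles, so it enters here only as a black box.
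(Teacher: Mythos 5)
Your overall route is the one the paper intends (it states this corollary with no proof, as a by-product of the Theorem, of Arthur's rationality, and of the normalization), and parts of your write-up are solid: the inclusion $\opC\subseteq\oCw$ (hence $\bar\Omega\subseteq\bar\Omega_w$) is correct, the ``no zeroes'' half is fine in the only reasonable reading ($\tilde{A}(\nu;w)\varphi_{0}=\varphi_{0}$, so the operator is never the zero operator wherever it is defined), and for $\Re(\nu)\in\opC$ there is nothing to do, since the Remark after Fact \ref{fac: Int-ops} already gives holomorphy there (convergent integral, $r(\nu;w)\neq0$).

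The gap is precisely where the corollary has content, namely on the walls of $\cC$. Your key analytic step --- ``a rational function bounded on the closure of a non-empty open set has no pole there,'' tested by approaching a hypothetical boundary pole along real half-lines into the cone --- is false for rational functions of several variables. Boundedness on the closed tube only excludes polar points at which the numerator does not also vanish; at a point of indeterminacy (numerator and denominator of the matrix coefficient both vanishing) the restriction to every real line entering the chamber is bounded and has a limit, yet the function need not extend holomorphically, or even continuously. Concretely, $F(z_{1},z_{2})=(z_{1}+z_{2})/(z_{1}+z_{2}-z_{1}^{2})$ has coprime numerator and denominator vanishing at $0$, so it is not holomorphic at $0$, but it is bounded near $0$ on the closed tube $\left\{ \Re z_{1}\geq0,\ \Re z_{2}\geq0\right\} $: membership in the tube forces $\left|z_{1}+z_{2}-z_{1}^{2}\right|\geq\tfrac{1}{2}\left|z_{1}\right|^{2}$ away from the origin, whence $|F|\leq3$ there. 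So rationality plus the bound of the Theorem on $\bar\Omega_{w}$ does not by itself rule out a wall singularity of $\langle\tilde{A}(\nu;w)\varphi,\psi\rangle$, and you cannot discharge this by citing the closed-domain form of the Theorem as a black box, because the paper's own reduction (``it suffices to establish the inequality for $\Re(\nu)\in\oCw$'') hides exactly the same issue --- that reading makes the corollary circular. What rescues the statement is an extra structural input: for instance the Gindikin--Karpelevich factorization of $\tilde{A}(\nu;w)$ into rank-one normalized operators indexed by $\beta\in\Phi_{w}$, each a rational function of the single variable $\nu_{\beta}$ with $\Re\nu_{\beta}\geq0$ on $\bar\Omega$, where the genuinely one-variable argument (exactly your $\SL_{2}(\R)$ computation) is conclusive; alternatively, one must get a bound on a full complex neighbourhood of each wall point (e.g.\ via the functional equations relating adjacent chambers) before invoking removability of singularities.
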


\section{\label{sec: do-lift}Degenerate lift}

In this section we establish Theorem \ref{mainthm:lift}.

\subsection{\label{sub: lift-one}The basic construction}

\paragraph*{One eigenfunction}

Let $\psi\in L^{2}(Y)$ be a normalized eigenfunction with the parameter
$\nu\in\bar{\Omega}$; let $R\colon\left(\mathcal{I}_{\nu},V_{K}\right)\to\left(\mathcal{R},L^{2}(X)_{K}\right)$
be an intertwining operator with $R(\varphi_{0})=\psi$. Given $f_{1},f_{2}\in V_{K}$
and $g\in\Test$ we set:
\[
\mu_{R}\left(f_{1},f_{2}\right)(g)\eqdef\int_{X}R(f_{1})\overline{R(f_{2})}g\, d\vol_{X}\,.
\]
By the Cauchy-Schwartz inequality and Theorem \ref{mainthm: intops},
\[
\left|\mu_{n}\left(f_{1},f_{2}\right)(g)\right|\leq\norm{f_{1}}_{L^{2}(K)}\norm{f_{2}}_{L^{2}(K)}\norm{g}_{L^{\infty}(X)}.
\]
In particular, the $\mu_{n}\left(f_{1},f_{2}\right)$ extend to finite
Borel measures on $X$ (positive measures when $f_{1}=f_{2}$). Also,
we have a bound on the total variation of these measures which depends
only on $f_{1}$ and $f_{2}$ but not on $\nu$ or $R$.

This construction extends to the case where one of the two test vectors
is not $K$-finite. Given $\Phi=\sum_{\tau\in\hat{K}}\phi_{\tau}\in\hat{V}_{K}$
we set: 
\[
\mu_{R}\left(f,\Phi\right)(g)=\sum_{\tau\in\hat{K}}\mu_{R}\left(f,\phi_{\tau}\right)(g)\,,
\]
noting that only finitely many $\tau$ can contribute. Letting $\Distr$
denote the algebraic dual of $\Test$, we have obtained a map : 
\[
\mu_{R}\colon V_{K}\times\hat{V}_{K}\to\Distr
\]
which is linear in the first variable and conjugate-linear in the
second. Integration by parts on $\Gamma\backslash G$ shows that the
extension $\mu_{R}\colon\left(\mathcal{I}_{\nu},\mathcal{V}_{K}\right)\to\Distr$
is an intertwining operator for the $\gK$ module structures.
\begin{rem}
By $\Distr$ we mean the \emph{algebraic} dual of our space $\Test$
of test functions. By abuse of terminology we shall call its elements
\emph{distributions}; convergence of distributions will be in the
weak-{*} (pointwise) sense. Apart from limits of uniformly bounded
sequences of measures, the limits we shall consider will be \emph{positive}
distributions (that is, take non-negative values at non-negative test
functions), and such distributions are always Borel measures (finiteness
will require an easy separate argument). For completeness we note,
however, that when $\Phi$ defines a distribution on $\mk$ in the
ordinary sense (as is the case with $\delta$), $\mu_{R}(f,\Phi)$
is bounded \wrt to an appropriate Sobolev norm and hence $\mu_{R}\left(f,\Phi\right)$
is a distribution on $X$ in the ordinary sense. Moreover, the bound
depends on $f$ and on the dual Sobolev norm of $\Phi$ but not $\nu$
or $R$.
\end{rem}

\paragraph*{A sequence of eigenfunctions}

Let $\sequ{\nu}\subset\bar{\Omega}$ such that $\norm{\nu_{n}}\to\infty$,
and let $R_{n}\colon\left(\mathcal{I}_{\nu_{n}},V_{K}\right)\to\left(\mathcal{R},L^{2}(X)_{K}\right)$
be intertwining operators with $\left\Vert R_{n}(\varphi_{0})\right\Vert _{L^{2}(X)}=1$.
Assume that $\mbn=\mu_{n}\left(\varphi_{0},\varphi_{0}\right)$ converge
weak-{*} to a limiting measure $\mbi$, which we would like to study.

Fixing $f_{1},f_{2}\in V_{K}$ the construction of the previous section
gives a sequence of Borel measures $\mu_{n}\left(f_{1},f_{2}\right)=\mu_{R_{n}}\left(f_{1},f_{2}\right)$
all of which have total variation at most $\norm{f_{1}}_{L^{2}(K)}\norm{f_{2}}_{L^{2}(K)}$.
By the Banach-Alaoglu theorem there exists a subsequence $\left\{ n_{k}\right\} _{k=1}^{\infty}\subset\N$
such that $\mu_{n_{k}}\left(f_{1},f_{2}\right)$ converge weak-{*}.
Fixing a countable basis $\left\{ \varphi_{i}\right\} _{i=1}^{\infty}\subset V_{K}$,
by the standard diagonalization argument we may assume (after passing
to a subsequence) that for any $f_{1},f_{2}\in V_{K}$ there exists
a measure $\mi\left(f_{1},f_{2}\right)$ such that for all $g\in\Test$,
\[
\lim_{n\to\infty}\mn\left(f_{1},f_{2}\right)(g)=\mi\left(f_{1},f_{2}\right)(g).
\]
As before, given $f_{1}$ and $g$, the value of $\mi\left(f_{1},f_{2}\right)(g)$
only depends on the projection of $f_{2}$ to a finite set of $K$-types.
We can thus extend $\mi$ to all of $\mathcal{V}_{K}=V_{K}\otimes\hat{V}_{K}$
and it is clear that $\mn$ converge weak-{*} to $\mi$ in the sense
that for any fixed $F\in\mathcal{V_{K}}$ and $g\in\Test$, $\lim_{n\to\infty}\mn(F)(g)=\mi(F)(g)$.

The asymptotic properties of $\mn$ are governed by the normalized
spectral parameters $\ntn=\frac{\nu_{n}}{\left\Vert \nu_{n}\right\Vert }$;
passing to a subsequence again we assume $\ntn\to\nti$ as $n\to\infty$.
Since the $\Re(\nu_{n})$ are uniformly bounded (we are dealing with
unitary representations), the limit parameter $\nti$ is purely imaginary.
\begin{defn}
Call the sequence of intertwining operators $\left\{ R_{n}\right\} _{n=1}^{\infty}$
\emph{conveniently arranged} if $\ntn$ converge to some $\nti\in i\ars$
and if for any $f_{1},f_{2}\in V_{K}$ the sequence of measures $\left\{ \mn\left(f_{1},f_{2}\right)\right\} _{n=1}^{\infty}$
converges in the weak-{*} topology.
\end{defn}
Given our limiting measure $\mbi$ we now fix once and for all a conveniently
arranged sequence $R_{n}$ such that $\mn\left(\varphi_{0},\varphi_{0}\right)$
converges to $\mbi$, and set $M_{1}=Z_{K}(\nti)$. The motivation
for the following choice will be come clear in the following Section.
\begin{defn}
\label{def: Lift}Let $\delta_{1}\in V_{K}'$ be the distribution
$\delta_{1}(f)=\int_{M\backslash M_{1}}f(m_{1})dm_{1}$. Set: 
\[
\mn=\mn\left(\varphi_{0}\otimes\delta_{1}\right)\,,
\]
which converge to the limit $\mi=\mi\left(\varphi_{0}\otimes\delta_{1}\right)$.
\end{defn}
Note that for a $K$-invariant test function $g$, $\mn(g)=\mbn(g)$
since the spherical part of $\delta_{1}$ is exactly $\varphi_{0}$.
It follows that the $\mn$ indeed are lifts of the measures $\mbn$
to $X=\Gamma\backslash G$, which is Claim \eqref{enu: m-Lift} of
the main Theorem.
\begin{rem}
Note that our definition of $\mn$ (and hence $\mi$) depends on the
limit point $\nti$, and not only on the limiting measure $\mbi$.
\end{rem}

\subsection{Integration by parts; positivity.}

Pointwise addition and multiplication give an algebra structure to
$\VK$. Our asymptotic calculus for the measures $\mn\left(f_{1},f_{2}\right)$
will depend on the the following elements of this algebra.

For $X\in\lieg$ and $k\in K$ we write the Iwasawa decomposition
of $\Ad(k)X$ as $X_{\lien}(k)+X_{\liea}(k)+X_{\liek}(k)$. Now for
$X\in\lieg$ and $\nti\in i\ars$ set: 
\[
p_{X}(k)=\frac{1}{i}\left\langle X_{\liea}(k),\nti\right\rangle 
\]
this is a left-$M_{1}$-invariant function on $K$, in particular
a left-$M$-invariant function on $K$. It is $K$-finite, being a
matrix element of the adjoint representation of $K$ on $\lieg$.
\begin{lem}
The subalgebra of $V_{K}$ generated by $\left\{ \varphi_{0}\right\} \cup\left\{ p_{X}\right\} _{X\in\lieg}$
under pointwise addition and multiplication \textup{is precisely}
$\mathcal{F}_{1}=C(M_{1}\backslash K)_{K}$, the algebra of left-$M_{1}$
invariant, right $K$-finite functions on $K$.\end{lem}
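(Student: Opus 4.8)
The plan is to let $\mathcal{A}$ denote the subalgebra generated by $\varphi_{0}$ and the $p_{X}$, and to prove $\mathcal{A}=\mathcal{F}_{1}$ in two stages: first show $\mathcal{A}$ is dense in $C(M_{1}\backslash K)$ by a Stone--Weierstrass argument, then upgrade density to an equality of $K$-finite algebras using the $K$-isotypic projectors.

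First I would dispose of the inclusion $\mathcal{A}\subseteq\mathcal{F}_{1}$: the constant $\varphi_{0}$ and each $p_{X}$ are left-$M_{1}$-invariant (as already noted) and right-$K$-finite, being matrix coefficients of the adjoint representation of $K$ on $\lieg$, and these two properties persist under pointwise sums and products. I would also record that $\mathcal{A}$ is stable under the right regular action of $K$: a short Iwasawa computation gives $R_{k}p_{X}=p_{\Ad(k)X}$, so $\Span\{p_{X}\}_{X\in\lieg}$ is $K$-stable, hence so is the algebra it generates. Consequently $\mathcal{A}$ and $\mathcal{F}_{1}$ both decompose into $K$-isotypic pieces, $\mathcal{A}=\bigoplus_{\tau\in\hat K}\mathcal{A}_{\tau}$ and $\mathcal{F}_{1}=\bigoplus_{\tau}(\mathcal{F}_{1})_{\tau}$, and it is enough to prove $\mathcal{A}_{\tau}=(\mathcal{F}_{1})_{\tau}$ for every $\tau\in\hat K$.

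The crux is a matrix-coefficient identity. Writing $\nti=i\eta$ with $\eta\in\ars$ regarded as an element of $\liea\subset\liep$, the $\lien$- and $\liek$-components of $\Ad(k)X$ are orthogonal to $\liea$ for the form $-B(\cdot,\theta\cdot)$ (because $B(\lieg_{\alpha},\lieg_{0})=0$ for $\alpha\neq0$ and $B(\liek,\liep)=0$), so only the $\liea$-part contributes, and $\Ad$-invariance of $B$ yields $p_{X}(k)=B(\Ad(k)X,\eta)=B(X,\Ad(k^{-1})\eta)$. Two consequences: each $p_{X}$ is real-valued, and, since $B$ is nondegenerate, the family $(p_{X}(k))_{X\in\lieg}$ determines the vector $\Ad(k^{-1})\eta\in\lieg$. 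Hence $(p_{X})_{X}$ takes equal values at $k_{1},k_{2}\in K$ precisely when $\Ad(k_{1}^{-1})\eta=\Ad(k_{2}^{-1})\eta$, that is when $k_{2}k_{1}^{-1}\in Z_{K}(\eta)=Z_{K}(\nti)=M_{1}$, that is when $M_{1}k_{1}=M_{1}k_{2}$. So $\mathcal{A}$ is a self-conjugate subalgebra of $C(M_{1}\backslash K)$ which contains the constants and separates the points of the compact space $M_{1}\backslash K$, and Stone--Weierstrass makes it dense.

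Finally I would descend to the $K$-finite level. For $\tau\in\hat K$ the isotypic projector $\Pi_{\tau}f=(\dim\tau)\int_{K}\overline{\chi_{\tau}(k)}\,R_{k}f\,dk$, where $\chi_{\tau}$ is the character of $\tau$, is continuous for the sup norm, surjects $C(M_{1}\backslash K)$ onto the finite-dimensional space $(\mathcal{F}_{1})_{\tau}$, and—because $\mathcal{A}$ is $K$-stable and $K$-finite—carries $\mathcal{A}$ onto $\mathcal{A}_{\tau}$; applying it to the density statement forces $\mathcal{A}_{\tau}$ to be dense in, hence equal to, the finite-dimensional space $(\mathcal{F}_{1})_{\tau}$. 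Summing over $\tau$ gives $\mathcal{A}=\mathcal{F}_{1}$. I expect the only genuine work to lie in the third paragraph: getting the orthogonality bookkeeping right so that $p_{X}$ is identified with the coadjoint matrix coefficient $X\mapsto B(X,\Ad(k^{-1})\eta)$, and confirming $Z_{K}(\nti)=M_{1}$, which is exactly what makes the separation of points match $M_{1}\backslash K$ rather than some intermediate quotient. The descent in the last step is routine but not skippable, since a dense $K$-stable subalgebra of $C(M_{1}\backslash K)$ need not a priori contain every $K$-finite function.
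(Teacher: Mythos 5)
Your proof is correct and follows essentially the same route as the paper: the identity $p_{X}(k)=B(X,\Ad(k^{-1})H_{\nti})$ shows the $p_{X}$ separate points of $M_{1}\backslash K$ (precisely because $M_{1}=Z_{K}(\nti)$), and Stone--Weierstrass does the rest. Your final step---using $K$-stability of the generated algebra and the isotypic projectors to upgrade sup-norm density to equality of the $K$-finite algebras---is a detail the paper leaves implicit, and you are right that it is needed, but it is the standard completion of the same argument rather than a different approach.
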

\begin{proof}
This follows from the Stone-Weierstrass Theorem, by which it suffices
to check that the functions $p_{X}$ separate the points of $M_{1}\backslash K$.
Indeed, if $p_{X}(k)=p_{X}(k')$ for all $X$ then $M_{1}k'=M_{1}k$
-- recall that $M_{1}$ was defined as the centralizer of $\nti$.
\end{proof}
Our calculation depends on the following basic formula, obtained by
integration by parts:
\begin{lem}
(\cite[Lem.\ 3.10 \& Cor.\ 3.11]{SilbermanVenkatesh:SQUE_Lift}) There
exists a norm $\left\Vert \cdot\right\Vert $ on $\Cic(X)_{K}$ such
that for any $f_{1},f_{2}\in V_{K}$ and $X\in\lieg$, 
\[
\left|\mn\left(p_{X}f_{1},f_{2}\right)(g)-\mn\left(f_{1},\overline{p_{X}}f_{2}\right)(g)\right|\ll_{f_{1},f_{2}}\left\Vert g\right\Vert \left[\left\Vert \ntn-\nti\right\Vert +\left\Vert \nu_{n}\right\Vert ^{-1}\right]\,.
\]
\end{lem}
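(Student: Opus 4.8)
The plan is to convert the pointwise product $p_X f$ (in the commutative algebra $V_K$) into a statement about the $\gK$-action on the induced module $(I_{\nu_n},V_K)$, carry it through the intertwining operator $R_n$ to $L^2(X)$, and then integrate by parts on $X=\Gamma\backslash G$: the action of $X\in\lieg$ on the first factor $R_n(f_1)$ is traded against its action on $\overline{R_n(f_2)}$ and on the test function $g$. The two leading terms, each of size $\norm{\nu_n}$, will cancel, leaving a term in $\mathcal{R}(X)g$ plus genuinely lower-order remainders; the uniform operator bound of Theorem \ref{mainthm: intops} is what keeps those remainders controlled as $n\to\infty$.

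\textbf{The compact-picture formula.} Writing the Iwasawa decomposition of $k\exp(tX)$ and differentiating at $t=0$ gives, for every $\nu\in\acs$ and $f\in V_K$,
\[
I_{\nu}(X)f=\langle\nu+\rho,X_{\liea}(\cdot)\rangle\,f+D_X f,
\]
where $\langle\cdot,\cdot\rangle$ is the Killing pairing on $\acs\times\liea$, $X_{\liea}(k)=(\Ad(k)X)_{\liea}$ is as in the statement, and $D_X$ is a first-order differential operator on $C^\infty(M\backslash K)$ that does not depend on $\nu$ and preserves $V_K$. Specializing to $\nu=\nu_n=\norm{\nu_n}\,\ntn$ and using $\langle\nti,X_{\liea}(k)\rangle=i\,p_X(k)$, this reads
\[
I_{\nu_n}(X)f=i\norm{\nu_n}\,p_X f+\norm{\nu_n}\,\langle\ntn-\nti,X_{\liea}(\cdot)\rangle\,f+\langle\rho,X_{\liea}(\cdot)\rangle\,f+D_X f.
\]
Solving for $p_X f$ and applying $\mn(\cdot,f_2)(g)$, which is linear in the first slot, writes $i\norm{\nu_n}\,\mn(p_X f_1,f_2)(g)$ as $\mn(I_{\nu_n}(X)f_1,f_2)(g)$ minus three remainders. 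Running the same computation in the second slot --- using that $\nti\in i\ars$ makes $p_X$ real-valued, so $\overline{p_X}=p_X$, and that $\mn$ is conjugate-linear there --- writes $i\norm{\nu_n}\,\mn(f_1,\overline{p_X}f_2)(g)$ as $-\mn(f_1,I_{\nu_n}(X)f_2)(g)$ plus the mirror-image remainders.

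\textbf{Integration by parts.} The functions $R_n(f_i)$ are smooth (being $K$-finite vectors in a $\gK$-submodule of $L^2(X)$ with an infinitesimal character, hence automorphic forms), so for $g\in\Test$ the product $R_n(f_1)\overline{R_n(f_2)}\,g$ is smooth and compactly supported, and $\int_X\mathcal{R}(X)\bigl(R_n(f_1)\overline{R_n(f_2)}\,g\bigr)\,d\vol_X=0$ because $\mathcal{R}(X)$ generates a measure-preserving flow. Expanding by the Leibniz rule and using $\mathcal{R}(X)R_n(f)=R_n(I_{\nu_n}(X)f)$ together with $\mathcal{R}(X)\overline{u}=\overline{\mathcal{R}(X)u}$ gives
\[
\mn(I_{\nu_n}(X)f_1,f_2)(g)+\mn(f_1,I_{\nu_n}(X)f_2)(g)=-\mn(f_1,f_2)(\mathcal{R}(X)g).
\]
Subtracting the two identities from the previous step and inserting this relation, the two $O(\norm{\nu_n})$ terms combine into $-\mn(f_1,f_2)(\mathcal{R}(X)g)$, and one is left with
\[
i\norm{\nu_n}\bigl(\mn(p_X f_1,f_2)(g)-\mn(f_1,\overline{p_X}f_2)(g)\bigr)=-\mn(f_1,f_2)(\mathcal{R}(X)g)-\norm{\nu_n}\,E_n-F_n,
\]
where $E_n$ is the sum of the two terms built from the function $\langle\ntn-\nti,X_{\liea}(\cdot)\rangle$ and $F_n$ the sum of those built from $\langle\rho,X_{\liea}(\cdot)\rangle$ and from $D_X$.

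\textbf{Estimates.} By Cauchy--Schwarz and Theorem \ref{mainthm: intops} one has $\abs{\mn(h_1,h_2)(\varphi)}\leq\norm{h_1}_{L^2(K)}\norm{h_2}_{L^2(K)}\norm{\varphi}_{L^\infty(X)}$ for all $h_1,h_2\in V_K$ and $\varphi\in\Test$, uniformly in $n$; this is the input that was unavailable in the degenerate range in \cite{SilbermanVenkatesh:SQUE_Lift}. Since $\langle\ntn-\nti,X_{\liea}(\cdot)\rangle$ has sup-norm $\ll_X\norm{\ntn-\nti}$ on $K$, it follows that $\abs{E_n}\ll_{f_1,f_2,X}\norm{\ntn-\nti}\,\norm{g}_{L^\infty(X)}$, while $\abs{F_n}\ll_{f_1,f_2,X}\norm{g}_{L^\infty(X)}$ (the coefficient function and $D_X$ are independent of $\nu$) and $\abs{\mn(f_1,f_2)(\mathcal{R}(X)g)}\ll_{f_1,f_2}\norm{\mathcal{R}(X)g}_{L^\infty(X)}$. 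Dividing through by $\norm{\nu_n}$ and taking $\norm{g}:=\norm{g}_{L^\infty(X)}+\max_i\norm{\mathcal{R}(X_i)g}_{L^\infty(X)}$ for a fixed basis $\{X_i\}$ of $\lieg$ --- the norm on $\Test$ provided by \cite[Lem.\ 3.10 \& Cor.\ 3.11]{SilbermanVenkatesh:SQUE_Lift} --- gives the asserted bound, the constant being uniform in $X$ over a bounded set. I expect the main point requiring care to be the bookkeeping of the cancellation: one must verify that the $\norm{\nu_n}$ multiplying $E_n$ is exactly the factor removed by the normalization, so that the $\ntn-\nti$ contribution survives at size $\norm{\ntn-\nti}$ while everything else is genuinely $O(\norm{\nu_n}^{-1})$.
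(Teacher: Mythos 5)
Your argument is correct and is essentially the proof the paper relies on: the lemma is quoted from \cite[Lem.~3.10 \& Cor.~3.11]{SilbermanVenkatesh:SQUE_Lift}, whose proof is exactly this compact-picture expansion $I_{\nu_n}(X)f=\left\langle \nu_n+\rho,X_{\liea}(\cdot)\right\rangle f+D_Xf$ followed by integration by parts on $\Gamma\backslash G$, with the two $O(\norm{\nu_n})$ terms recombining into $-\mu_{n}\left(f_{1},f_{2}\right)(\mathcal{R}(X)g)$ and the difference $\norm{\nu_n}\left\langle \ntn-\nti,X_{\liea}(\cdot)\right\rangle$ producing the $\norm{\ntn-\nti}$ term after division by $\norm{\nu_n}$. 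You also correctly identify the one ingredient needed to run this at degenerate parameters, namely the uniform bound $\left|\mu_{n}\left(h_{1},h_{2}\right)(\varphi)\right|\leq\norm{h_1}_{L^{2}(K)}\norm{h_2}_{L^{2}(K)}\norm{\varphi}_{L^{\infty}(X)}$ supplied by Theorem \ref{mainthm: intops}.
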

\begin{cor}
\label{cor: int-parts}Let $f\in\mathcal{F}_{1}$ and $f_{1},f_{2}\in V_{K}$
. Then, for any $g\in\Cic(X)_{K}$, 
\[
\left|\mu_{n}\left(f\cdot f_{1},f_{2}\right)(g)-\mu_{n}\left(f_{1},\overline{f}\cdot f_{2}\right)(g)\right|\ll_{f,f_{1},f_{2}}\left\Vert g\right\Vert \left[\left\Vert \ntn-\nti\right\Vert +\left\Vert \nu_{n}\right\Vert ^{-1}\right]\,.
\]

\end{cor}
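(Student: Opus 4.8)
The plan is to deduce Corollary~\ref{cor: int-parts} from the preceding Lemma by a telescoping argument, using that every $f\in\mathcal{F}_1$ is, by the algebra description just proved, a polynomial in $\varphi_0$ and the $p_X$'s. First I would record the trivial base case: if $f=\varphi_0$ the two sides of the claimed inequality are literally equal (since $\varphi_0\cdot f_1=f_1$ and $\overline{\varphi_0}\cdot f_2=f_2$), so there is nothing to prove, and the same holds for any scalar multiple of $\varphi_0$. The content is therefore entirely in handling products $p_{X_1}p_{X_2}\cdots p_{X_m}$.

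\medskip

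Next I would set up the induction on the number of factors. Suppose the estimate
\[
\left|\mu_n\left(h\cdot f_1,f_2\right)(g)-\mu_n\left(f_1,\overline{h}\cdot f_2\right)(g)\right|\ll_{h,f_1,f_2}\left\Vert g\right\Vert\left[\left\Vert\ntn-\nti\right\Vert+\left\Vert\nu_n\right\Vert^{-1}\right]
\]
is known for some $h\in\mathcal{F}_1$ and all $f_1,f_2\in V_K$; I want it for $p_X\cdot h$. Write the difference for $p_X\cdot h$ as the sum of two pieces by inserting an intermediate term:
\[
\mu_n\!\left((p_X h)f_1,f_2\right)-\mu_n\!\left(f_1,\overline{(p_X h)}f_2\right)
=\Big[\mu_n\!\left(p_X(hf_1),f_2\right)-\mu_n\!\left(hf_1,\overline{p_X}f_2\right)\Big]
+\Big[\mu_n\!\left(hf_1,\overline{p_X}f_2\right)-\mu_n\!\left(f_1,\overline{p_X}\,\overline{h}f_2\right)\Big].
\]
The first bracket is bounded by the Lemma applied with the pair $(hf_1,f_2)\in V_K\times V_K$ (note $hf_1\in V_K$ since $\mathcal{F}_1\subset V_K$ and $V_K$ is an algebra under pointwise multiplication), with implied constant depending on $hf_1$ and $f_2$, i.e.\ ultimately on $h,f_1,X,f_2$. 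The second bracket is bounded by the induction hypothesis applied to $h$ with the modified second test vector $\overline{p_X}f_2\in V_K$, with implied constant depending on $h$, $f_1$ and $\overline{p_X}f_2$, again ultimately on $h,f_1,X,f_2$. Adding the two gives the desired bound for $p_X\cdot h$, with an implied constant depending only on $p_X h$ (equivalently on the finite data defining it), $f_1$, $f_2$. Since a general $f\in\mathcal{F}_1$ is a finite linear combination of monomials $p_{X_1}\cdots p_{X_m}$ together with a multiple of $\varphi_0$, linearity in $f$ of both $\mu_n(f\cdot f_1,f_2)$ and $\mu_n(f_1,\overline{f}\cdot f_2)$ (the latter conjugate-linear in $\overline f$, hence linear in $f$) lets me sum the monomial estimates to conclude.

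\medskip

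The main thing to be careful about — rather than a genuine obstacle — is bookkeeping of the implied constants: at each inductive step the ``second slot'' test vector grows from $f_2$ to $\overline{p_X}f_2$, so after $m$ steps one is invoking the basic Lemma with second argument of the form $\overline{p_{X_{i_1}}}\cdots\overline{p_{X_{i_j}}}f_2$; one must check these all lie in $V_K$ (they do, $V_K$ being closed under pointwise products) and that the resulting constant still depends only on $f=p_{X_1}\cdots p_{X_m}$, $f_1$, $f_2$ and not on $n$, $\nu_n$ or $R_n$. The uniformity in $n$ is automatic because the only $n$-dependence on the right of the basic Lemma is the explicit factor $\left\Vert\ntn-\nti\right\Vert+\left\Vert\nu_n\right\Vert^{-1}$, which is untouched by the telescoping. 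No other input is needed.
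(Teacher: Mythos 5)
Your argument is correct and is exactly the intended one: the paper leaves this Corollary without an explicit proof precisely because it follows from the integration-by-parts Lemma by the telescoping/induction over monomials in the generators $p_X$ of $\mathcal{F}_1$ (with $\varphi_0$ as the trivial case), together with linearity in $f$, just as you wrote. Your bookkeeping of the constants and the observation that $V_K$ is closed under pointwise products are the right points to check, and they hold.
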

Claims \eqref{enu: m-positive} and \eqref{enu: m-equiv} of the main
Theorem now follow from:
\begin{prop}
We can choose $f_{n}\in V_{K}$ (in the notation of the main Theorem,
set $\tilde{\psi}_{n}=R_{n}(f_{n})$) so that the measures $\sigma_{n}=\mu_{n}(f_{n},f_{n})$
converge weak-{*} to $\mu_{\infty}$.\end{prop}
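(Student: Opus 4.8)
The plan is to realise the off-diagonal test vector $\varphi_{0}\otimes\delta_{1}$ as a weak-{*} limit of honestly diagonal measures $\abs{R_{n}(f_{n})}^{2}\,d\vol_{X}$, the engine being the limiting form of Corollary \ref{cor: int-parts}. Because the chosen sequence $R_{n}$ is conveniently arranged we have $\ntn\to\nti$ and $\norm{\nu_{n}}\to\infty$, so the right-hand side of Corollary \ref{cor: int-parts} tends to $0$; since $\mn(f_{1},f_{2})(g)$ converges for every $g$, passing to the limit gives, for all $u\in\mathcal{F}_{1}$ and $f_{1},f_{2}\in\VK$,
\[
\mi\left(u\cdot f_{1},f_{2}\right)=\mi\left(f_{1},\bar{u}\cdot f_{2}\right).
\]
Specialising to $f_{1}=\varphi_{0}$ and $f_{2}=u$ (and using $u\cdot\varphi_{0}=u$, $\bar{u}\cdot u=\abs{u}^{2}$) yields the key identity
\[
\mi(u,u)=\mi\left(\varphi_{0},\abs{u}^{2}\right)\qquad(u\in\mathcal{F}_{1}).
\]
So it suffices to produce $u\in\mathcal{F}_{1}$ with $\abs{u}^{2}$ close to $\delta_{1}$ in the negative Sobolev norm in which the a-priori bound recorded in Section \ref{sub: lift-one} is effective.

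For each $j\geq1$ I would build such a ``square root of an approximate identity''. Since $\delta_{1}(f)=\int_{M\bs M_{1}}f$ is Haar measure on the submanifold $M\bs M_{1}\subset M\bs K$, choose a left-$M_{1}$-invariant $\chi_{j}\in\Ci(K)$ concentrated near the base coset and normalised so that $\int_{K}\chi_{j}^{2}=1$; then $\chi_{j}^{2}\to\delta_{1}$ weak-{*}. By Peter--Weyl, approximate $\chi_{j}$ in $C^{k}(M_{1}\bs K)$ well enough by a left-$M_{1}$-invariant, right-$K$-finite $u_{j}\in\mathcal{F}_{1}=C(M_{1}\bs K)_{K}$ that $\abs{u_{j}}^{2}-\chi_{j}^{2}\to0$ uniformly, and renormalise so that $\norm{u_{j}}_{L^{2}(K)}=1$. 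Then the $\abs{u_{j}}^{2}$ have bounded mass and converge weak-{*} to $\delta_{1}$, hence converge to $\delta_{1}$ in the relevant $H^{-s}$; applying the a-priori bound to $\Phi=\abs{u_{j}}^{2}-\delta_{1}$ gives
\[
\left\vert \mn(\varphi_{0},\abs{u_{j}}^{2})(g)-\mn(\varphi_{0},\delta_{1})(g)\right\vert \ll\norm{g}\,\norm{\abs{u_{j}}^{2}-\delta_{1}}_{H^{-s}}
\]
uniformly in $n$, whence $\mi(u_{j},u_{j})=\mi(\varphi_{0},\abs{u_{j}}^{2})\to\mi(\varphi_{0},\delta_{1})=\mi$ as $j\to\infty$.

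Finally I would diagonalise. For each fixed $j$ the measures $\mn(u_{j},u_{j})$ are positive of total mass $\leq\norm{u_{j}}_{L^{2}(K)}^{2}=1$ (Theorem \ref{mainthm: intops}) and converge weak-{*} to $\mi(u_{j},u_{j})$ by convenient arrangement, while $\mi(u_{j},u_{j})\to\mi$. Testing against a countable family of functions that determines weak-{*} limits of such uniformly bounded measures, a standard extraction produces $j(n)\to\infty$, slowly enough, for which $f_{n}\eqdef u_{j(n)}\in\VK$ satisfies $\sigma_{n}=\mn(f_{n},f_{n})\to\mi$ weak-{*}. Since $d\sigma_{n}=\abs{R_{n}(f_{n})}^{2}\,d\vol_{X}=\abs{\tilde{\psi}_{n}}^{2}\,d\vol_{X}$ is visibly a positive measure of mass $\leq1$, and $\tilde{\psi}_{n}=R_{n}(f_{n})$ lies in the irreducible spherical subrepresentation of $L^{2}(X)$ generated by $\psi_{n}=R_{n}(\varphi_{0})$, Claims \eqref{enu: m-positive} and \eqref{enu: m-equiv} then follow.

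I expect the only real subtlety to be the interplay between the two constraints on $u_{j}$: it must lie in $\mathcal{F}_{1}$ so that Corollary \ref{cor: int-parts} applies with $f=u_{j}$, and simultaneously $\abs{u_{j}}^{2}$ must converge to $\delta_{1}$ in the \emph{same} Sobolev norm in which the a-priori bound is quantitative. All the genuinely hard analytic content is already packaged in Corollary \ref{cor: int-parts} -- hence in Theorem \ref{mainthm: intops} and the integration-by-parts estimate -- which is exactly what makes the commutation relation $\mi(u\cdot f_{1},f_{2})=\mi(f_{1},\bar{u}\cdot f_{2})$, and thus the passage from the off-diagonal vector $\varphi_{0}\otimes\delta_{1}$ to an honest diagonal limit, available; the mollifier construction on $M_{1}\bs K$ and the diagonalisation are routine.
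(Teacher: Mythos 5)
Your argument is correct and is essentially the paper's own proof: approximate $\delta_{1}$ by squares of left-$M_{1}$-invariant $K$-finite functions, use Corollary \ref{cor: int-parts} to trade $\mu_{n}(u,u)$ for $\mu_{n}(\varphi_{0},\abs{u}^{2})$, control $\mu_{n}(\varphi_{0},\abs{u}^{2}-\delta_{1})$ through the finitely many relevant $K$-types via the a-priori bound of Theorem \ref{mainthm: intops}, and let the approximation index grow slowly with $n$. The only difference is bookkeeping: the paper makes the slow-growth choice quantitative (picking $k(n)$ from the explicit constants $C_{k}$ and the rate $\left\Vert \ntn-\nti\right\Vert +\left\Vert \nu_{n}\right\Vert ^{-1}$, then projecting $\delta_{1}-f_{n}^{2}$ onto finitely many $K$-types), whereas you first pass to the limit in Corollary \ref{cor: int-parts} and then perform a qualitative diagonal extraction over a countable determining family, which is equally valid.
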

\begin{proof}
Let $\left\{ h_{k}\right\} _{k=1}^{\infty}\in\cF_{1}$ be real-valued
functions such that $h_{k}^{2}$ converge weak-{*} to $\delta_{1}$,
and let $h_{0}=\varphi_{0}$ (it is easy to see that such a sequence
exists). By Corollary \ref{cor: int-parts} there exists constants
$C_{k}$ depending only on the choice of $f_{k}$ such that for any
$g\in\Test$ and $n$, 
\[
\left|\mn\left(\varphi_{0},h_{k}^{2}\right)(g)-\mn\left(h_{k},h_{k}\right)(g)\right|\leq C_{k}\left\Vert g\right\Vert \left[\left\Vert \ntn-\nti\right\Vert +\left\Vert \nu_{n}\right\Vert ^{-1}\right]\,.
\]
Noting that $C_{0}=0$, given $n\geq1$ let $k(n)$ be the maximal
$k\in\left\{ 0,\cdots,n\right\} $ such that $C_{k}\leq\left[\left\Vert \ntn-\nti\right\Vert +\left\Vert \nu_{n}\right\Vert ^{-1}\right]^{-1/2}$,
and set $f_{n}=h_{k(n)}$, $\sn=\mn(f_{n},f_{n})$. The sequence $k(n)$
is monotone and tends to infinity; it follows that $f_{n}^{2}$ converge
weakly to $\delta_{1}$.

Finally, we have: 
\[
\left|\mn(g)-\sn(g)\right|\leq\left|\mn\left(\varphi_{0},\delta_{1}-f_{n}^{2}\right)(g)\right|+\left[\left\Vert \ntn-\nti\right\Vert +\left\Vert \nu_{n}\right\Vert ^{-1}\right]^{1/2}\left\Vert g\right\Vert \,.
\]
Let $T\subset\hat{K}$ be a finite subset such that $g\in\sum_{\tau\in T}\Cic(X)_{\tau}$.
Let $d_{n}\in\sum_{\tau\in T}V_{\tau}$ be the projection of $\delta_{1}-f_{n}^{2}$
to that space. Then $\overline{R(\delta_{1}-f_{n}^{2}-d_{n})}$ has
trivial pairing with $R(\varphi_{0})g$, since they don't transform
under the same $K$-types. We may thus bound the first term in the
inequality above by $\left|\mn\left(\varphi_{0},d_{n}\right)(g)\right|\leq\left\Vert d_{n}\right\Vert _{L^{2}(K)}\left\Vert g\right\Vert _{L^{\infty}(X)}$.
Since $\sum_{\tau\in T}V_{\tau}$ is finite-dimensional, that $d_{n}\to0$
weakly implies that $d_{n}\to0$ in norm. Since $\mn(g)\to\mi(g)$
we conclude that $\sn(g)\to\mi(g)$ as well.\end{proof}
\begin{cor}
\label{cor: positive}$\mi$ extends to a non-negative measure on
$X$ of total mass at most $1$. When $X$ is compact $\mi$ is a
probability measure. \end{cor}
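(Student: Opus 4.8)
The plan is to bootstrap everything off the Proposition just established, which realizes $\mi$ as the weak-* limit of the measures $\sn=\mn(f_n,f_n)$, together with the uniform operator bound of Theorem \ref{mainthm: intops}.

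First I would observe that each $\sn$ is already a genuine finite positive Borel measure: $\sn(g)=\int_X R_n(f_n)\overline{R_n(f_n)}\,g\,d\vol_X$, so $d\sn=\abs{\tilde\psi_n}^2\,d\vol_X\geq 0$, with total mass $\sn(X)=\norm{\tilde\psi_n}_{L^2(X)}^2=\norm{R_n(f_n)}_{V_\pi}^2\leq\norm{f_n}_{L^2(K)}^2$ by Theorem \ref{mainthm: intops}. Since $f_n=h_{k(n)}$ with the $h_k\in\mathcal{F}_1$ a fixed real-valued, left-$M_1$-invariant sequence with $h_k^2\to\delta_1$ weak-*, and since a left-$M$-invariant function on $K$ satisfies $\int_K\abs{h}^2\,dk=\langle h^2,\varphi_0\rangle$ (the pairing of $V_K$ with $\hat V_K$), evaluating this weak-* convergence at $\varphi_0$ gives $\norm{f_n}_{L^2(K)}^2=\langle h_{k(n)}^2,\varphi_0\rangle\to\delta_1(\varphi_0)=1$, using $k(n)\to\infty$ and the normalization $\int_{M\backslash M_1}dm_1=1$. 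In particular $\sup_n\sn(X)<\infty$ and $\limsup_n\sn(X)\leq 1$.

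Next I would extract a Radon limit: positive Radon measures of uniformly bounded total mass on the locally compact, second-countable space $X$ are vaguely sequentially precompact, with total mass lower-semicontinuous (Banach--Alaoglu applied to finite measures on $X$), so after passing to a subsequence $\sn\to\sigma$ vaguely for some positive Radon measure $\sigma$ with $\sigma(X)\leq\liminf_n\sn(X)\leq 1$. For every $g\in\Test\subset\Cc(X)$ one has $\sigma(g)=\lim_n\sn(g)=\mi(g)$, the last equality holding along the full sequence by the Proposition; hence $\sigma$ represents $\mi$ on $\Test$, which is the claimed extension of $\mi$ to a non-negative measure of total mass at most $1$. Passing to a subsequence here is harmless, since $\mi$ is already fixed and we need only exhibit one representing measure.

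Finally, for the compact case: when $X$ is compact the constant function $\mathbf 1$ lies in $\Test$ (smooth, $K$-invariant hence $K$-finite, trivially of compact support), and since the spherical part of $\delta_1$ is $\varphi_0$ we get $\mn(\mathbf 1)=\mbn(\mathbf 1)=1$, each $\mbn$ being a probability measure; therefore $\sigma(X)=\sigma(\mathbf 1)=\mi(\mathbf 1)=\lim_n\mn(\mathbf 1)=1$, which together with $\sigma\geq 0$ and the previous paragraph forces $\mi$ to be a probability measure. The only steps requiring genuine care rather than routine verification are the normalization bookkeeping ($\delta_1(\varphi_0)=1$, and $\int_K\abs{h}^2\,dk=\int_{M\backslash K}\abs{h}^2$ for $M$-invariant $h$) and the passage in the third paragraph from weak-* convergence tested only against the $K$-finite functions $\Test$ to an honest Radon measure on $X$; I do not expect either to be a real obstacle.
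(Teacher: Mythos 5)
Your proof is correct, but it bounds the total mass by a different mechanism than the paper. Both arguments start from the Proposition: $\mi$ is the weak-* limit (tested on $\Test$) of the positive measures $\sn=\mn(f_n,f_n)$, whence non-negativity. For the mass bound the paper simply observes that on $K$-invariant test functions $\mi$ agrees with $\mbi$, a weak-* limit of probability measures on $Y$ (and, when $Y$ is compact, itself a probability measure), so the pushforward of $\mi$ to $Y$ --- and hence $\mi$ itself --- has mass at most $1$, with equality in the compact case. You instead get the bound upstairs on $X$, from Theorem \ref{mainthm: intops}: $\sn(X)=\norm{R_n(f_n)}_{L^2(X)}^2\le\norm{f_n}_{L^2(K)}^2$, together with $\norm{h_{k(n)}}_{L^2(K)}^2=\int_{\mk}h_{k(n)}^2\to\delta_1(\varphi_0)=1$, which uses $h_k^2\to\delta_1$ weak-* and the probability normalization of $dm_1$ on $M\backslash M_1$ (the same normalization the paper uses implicitly when asserting that the spherical part of $\delta_1$ is exactly $\varphi_0$). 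Your handling of the extension step is also more explicit: rather than appealing to the paper's earlier remark that positive distributions on $\Test$ are Borel measures (``finiteness will require an easy separate argument''), you extract a vague limit of the $\sn$ from the uniform mass bound and lower semicontinuity of mass, which produces the representing measure and the bound $\le 1$ simultaneously; passing to a subsequence there is indeed harmless since $\mi$ is already fixed. Your compact case, testing the constant function $\mathbf{1}\in\Test$ via the lift property $\mn(\mathbf{1})=\mbn(\mathbf{1})=1$, is the paper's observation in slightly different clothing. Net comparison: the paper's route is shorter and exploits the projection to $Y$; yours is more quantitative and self-contained on $X$, at the cost of the normalization bookkeeping for $f_n$, and it has the small side benefit of making the sources of the mass bound (the operator norm bound of Theorem \ref{mainthm: intops} and the choice of $h_k$) completely explicit.
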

\begin{proof}
The $\sigma_{n}$ extend to positive measures, hence $\mi$ extends
to a non-negative measure. To bound the total mass it suffices to
consider $K$-invariant test functions for which $\mi$ agrees with
$\mbi$, a weak-{*} limit of probability measures.\end{proof}
\begin{cor}
\label{cor: equiv}When $\psi_{n}$ are eigenfunctions of an algebra
$\mathcal{H}$ of operators which commute with the $G$-action, then
so are $\tilde{\psi}_{n}$.\end{cor}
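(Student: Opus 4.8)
The plan is to deduce the statement from Schur's Lemma, using the first assertion of claim \eqref{enu: m-equiv} of Theorem~\ref{mainthm:lift}: that each $\tilde{\psi}_{n}$ lies in the irreducible $G$-subrepresentation $\pi_{n}\subset L^{2}(X)$ generated by $\psi_{n}$. First I would make that containment explicit. The functions $f_{n}$ constructed in the Proposition above lie in $\mathcal{F}_{1}\subset V_{K}$, so $\tilde{\psi}_{n}=R_{n}(f_{n})$ belongs to the image of the intertwining operator $R_{n}\colon(I_{\nu_{n}},V_{K})\to L^{2}(X)_{K}$. Since $R_{n}$ is constructed so as to realize the irreducible spherical $(\lieg,K)$-module generated by $\psi_{n}=R_{n}(\varphi_{0})$ as a quotient of $(I_{\nu_{n}},V_{K})$ (using that every such module arises this way), its image is exactly $(\pi_{n})_{K}$, whence $\tilde{\psi}_{n}\in\pi_{n}$; and $\psi_{n}=R_{n}(\varphi_{0})\in\pi_{n}$ trivially.

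The rest is the formal Schur argument. Given $T\in\mathcal{H}$ with $T\psi_{n}=\lambda\psi_{n}$, I would first recall that a bounded operator commuting with the $G$-action preserves the smooth vectors of $L^{2}(X)$ and commutes there with the infinitesimal action of $U(\lieg)$. Since $\psi_{n}$, being non-zero, is a cyclic vector of the irreducible $\pi_{n}=\overline{U(\lieg)\psi_{n}}$, and $T\psi_{n}$ is a scalar multiple of $\psi_{n}$, the operator $T$ maps the dense subspace $U(\lieg)\psi_{n}$ into itself and hence, by continuity, satisfies $T(\pi_{n})\subseteq\pi_{n}$. Thus $T$ restricts to a bounded $G$-equivariant operator on the irreducible unitary representation $\pi_{n}$, which Schur's Lemma forces to be a scalar; evaluating on $\psi_{n}$ identifies that scalar as $\lambda$. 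Therefore $T\tilde{\psi}_{n}=\lambda\tilde{\psi}_{n}$, and since $T\in\mathcal{H}$ was arbitrary, $\tilde{\psi}_{n}$ is a joint eigenfunction of $\mathcal{H}$ with the same eigenvalues as $\psi_{n}$.

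The only substantive point, and the one I would be most careful about, is the first step: one must ensure that $\tilde{\psi}_{n}$ does not leak out of $\pi_{n}$. For an arbitrary intertwining operator into $L^{2}(X)_{K}$ with reducible domain this could genuinely fail, since $f_{n}\in\mathcal{F}_{1}$ need not lie in the submodule $U(\lieg)\varphi_{0}$ of $(I_{\nu_{n}},V_{K})$; but we are free to choose $R_{n}$, and fixing it at the outset --- as in Section~\ref{sub: lift-one} --- to surject onto the irreducible module $(\pi_{n})_{K}$ removes the difficulty without affecting anything downstream. Once $\tilde{\psi}_{n}\in\pi_{n}$ is in hand, the remaining argument is entirely routine.
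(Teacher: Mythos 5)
Your proposal is correct and is essentially the paper's own (one-line) argument: $\tilde{\psi}_{n}=R_{n}(f_{n})$ lies in the irreducible subrepresentation generated by $\psi_{n}$, since $R_{n}$ is chosen to realize that representation as a quotient of $(I_{\nu_{n}},V_{K})$, and then Schur's Lemma (or, even more directly, $T(g\cdot\psi_{n})=\lambda\, g\cdot\psi_{n}$ for $T\in\mathcal{H}$ with eigenvalue $\lambda$) shows each element of $\mathcal{H}$ acts by the same scalar on that subrepresentation. The extra care you take about the image of $R_{n}$ and the detour through smooth vectors and $U(\lieg)$ merely fill in details the paper leaves implicit.
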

\begin{proof}
By Schur's Lemma each element of $\mathcal{H}$ acts as a scalar on
the irreducible representation generated by $\psi_{n}$; $\tilde{\psi}_{n}$
belongs to this representation.
\end{proof}

\subsection{\label{sub: A-inv}$A_{1}$-invariance.}

Let $\delta\in V_{K}'\isom\hat{V}_{K}$ be the delta distribution,
that is $\delta(f)=f(1)$. Since $\liea$ is a quotient of $\liem\oplus\liea\oplus\lien$
by a Lie ideal, we can consider any $\lambda\in\acs$ as a Lie algebra
homomorphism $\liea_{\C}\oplus\liem_{\C}\oplus\lien_{\C}\to\C$. It
thus extends to an algebra homomorphism $U(\liea_{\C}\oplus\liem_{\C}\oplus\lien_{\C})\to\C$,
and there exists a unique algebra endomorphism $\tau_{\lambda}\colon U(\liea_{\C}\oplus\liem_{\C}\oplus\lien_{\C})\to U(\liea_{\C}\oplus\liem_{\C}\oplus\lien_{\C})$
such that $\tau_{\lambda}(X)=X+\lambda(X)$ for $X\in\liea_{\C}\oplus\liem_{\C}\oplus\lien_{\C}$.
\begin{lem}
Let $\nu\in\acs$, $u\in U(\liea_{\C}\oplus\liem_{\C}\oplus\lien_{\C})$
\begin{enumerate}
\item $I_{\nu}(u)\delta=(-\rho+\nu)(u)\cdot\delta$.
\item $\mathcal{I}_{\nu}(\tau_{\rho+\nu-2\Re(\nu)}(u))\left(f\otimes\delta\right)=\left(I_{\nu}(u)f\right)\otimes\delta$.
\end{enumerate}
\end{lem}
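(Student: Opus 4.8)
The plan is to reduce both parts to a single weight computation — that $\delta$ is a weight vector for the action of $\liea_\C\oplus\liem_\C\oplus\lien_\C$ — and then to propagate this through $U(\liea_\C\oplus\liem_\C\oplus\lien_\C)$ and through the Leibniz rule for the tensor product $\mathcal{I}_\nu=I_\nu\otimes\bar I_\nu$.

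For (1): since $u\mapsto I_\nu(u)$ restricted to $U(\liea_\C\oplus\liem_\C\oplus\lien_\C)$ and the character $u\mapsto(-\rho+\nu)(u)$ are both algebra homomorphisms, it suffices to prove $I_\nu(X)\delta=(-\rho+\nu)(X)\,\delta$ for $X\in\liea_\C\oplus\liem_\C\oplus\lien_\C$ and then induct on the degree of $u$. For that base case I would unwind the $\gK$-action on $\hat V_K\isom V_K'$ using the $G$-invariant bilinear pairing $(f,g)\mapsto\int_{\mk}fg$ of $\mathcal{F}^\nu$ with $\mathcal{F}^{-\nu}$: it gives $(I_\nu(X)\delta)(f)=-(I_{-\nu}(X)f)(1)$. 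For $X$ in the Lie algebra $\lien\oplus\liea\oplus\liem$ of $B=NAM$, write $\exp(tX)=n(t)a(t)m(t)$; the defining covariance of $\mathcal{F}^{-\nu}$ gives $\tilde f(\exp tX)=a(t)^{-\nu+\rho}\tilde f(1)$, so differentiating at $t=0$ — and using that $\tfrac{d}{dt}\big|_{0}\log a(t)$ is the $\liea$-component of $X$, while $-\nu+\rho$ annihilates $\liem\oplus\lien$ — yields $(I_\nu(X)\delta)(f)=(\nu-\rho)(X)\,\delta(f)$. Extending $\R$- then $\C$-linearly gives the base case; the induction uses only that $-\rho+\nu$ is multiplicative on $U(\liea_\C\oplus\liem_\C\oplus\lien_\C)$.

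For (2): $\tau_{\rho+\nu-2\Re(\nu)}$ is an algebra endomorphism of $U(\liea_\C\oplus\liem_\C\oplus\lien_\C)$ and $\mathcal{I}_\nu$, $I_\nu$ are algebra homomorphisms, so again it suffices to treat $u=X\in\liea_\C\oplus\liem_\C\oplus\lien_\C$ and then induct on the degree of $u$, writing $u=Xu'$ and applying the base case with $f$ replaced by $I_\nu(u')f$. For the base case the Leibniz rule gives $\mathcal{I}_\nu(X)(f\otimes\delta)=(I_\nu(X)f)\otimes\delta+f\otimes\bar I_\nu(X)\delta$; since $\bar I_\nu=I_\nu\otimes\bar{\mathds{1}}$ carries the conjugate scalar action, the weight of $\delta$ from part (1) gets conjugated, so for real $X$ we get $f\otimes\bar I_\nu(X)\delta=\overline{(\nu-\rho)(X)}\,(f\otimes\delta)=(\bar\nu-\rho)(X)\,(f\otimes\delta)$, and both sides being $\C$-linear in $X$ this persists on all of $\liea_\C\oplus\liem_\C\oplus\lien_\C$. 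Since $\rho+\nu-2\Re(\nu)=\rho-\bar\nu$, the $\delta$-terms cancel: $\mathcal{I}_\nu(\tau_{\rho+\nu-2\Re(\nu)}(X))(f\otimes\delta)=\mathcal{I}_\nu(X)(f\otimes\delta)+(\rho-\bar\nu)(X)(f\otimes\delta)=(I_\nu(X)f)\otimes\delta$.

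These computations are elementary; I expect the only real care to be needed in the bookkeeping of conventions. Two points in particular: the sign and direction in the identity $(I_\nu(X)\delta)(f)=-(I_{-\nu}(X)f)(1)$ — the operator on the right-hand side is $I_{-\nu}$, not $I_\nu$ — and the precise effect of the $\bar{\mathds{1}}$-twist, which must turn the weight $-\rho+\nu$ into $-\rho+\bar\nu$; it is exactly this last point that forces the correction term $2\Re(\nu)=\nu+\bar\nu$, rather than something else, to appear in the shift $\tau_{\rho+\nu-2\Re(\nu)}$. As a conceptual sanity check, $\delta$ is the evaluation functional at the base point of the flag variety $B\backslash G\isom\mk$, whose stabilizer $B$ acts on the fibers of $\mathcal{F}^{\pm\nu}$ through the character $\pm\nu+\rho$; I would also confirm the constants against the explicit $\SL_2(\R)$ computation in \subref{SL2-example}.
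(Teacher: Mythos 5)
Your proof is correct and follows essentially the same route as the paper's (much terser) argument: reduce by induction to $u=X\in\liem\oplus\liea\oplus\lien$, get the weight of $\delta$ from the invariant pairing of $\mathcal{F}^{\nu}$ with $\mathcal{F}^{-\nu}$, and obtain (2) by conjugating that weight in the $\bar{I}_{\nu}$ factor so that the shift $\rho+\nu-2\Re(\nu)=\rho-\bar{\nu}$ cancels the scalar term. The extra bookkeeping you supply (the sign in the dual action and the $\SL_{2}(\R)$ sanity check) is consistent with the paper's conventions.
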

\begin{proof}
By induction it suffices to prove both assertions for $u=X\in\liem\oplus\liea\oplus\lien$.
The first claim follows from the the invariant pairing of $\mathcal{F}^{\nu}$
with $\mathcal{F}^{-\nu}$. Taking complex conjugates, this implies:
\[
\mathcal{I}_{\nu}(X)\left(f\otimes\delta\right)=\left(I_{\nu}(X)f\right)\otimes\delta+\left\langle -\rho_{\liea}+\bar{\nu},X\right\rangle (f\otimes\delta)\,,
\]
which is the second assertion.
\end{proof}
We next summarize the analysis of the center of the universal enveloping
algebra done in \cite[\S4]{SilbermanVenkatesh:SQUE_Lift}.
\begin{prop}
Let $\mathcal{P}\in U(\lieh_{\C})^{\CW}$ be homogeneous of degree
$d$. Then there exist elements $b=b(\mathcal{P})\in U(\lien_{\C})U(\liea_{\C})^{\leq d-2}$
and $c=c(\mathcal{P})\in U(\lieg_{\C})\liek_{\C}$ so that 
\[
z=\tau_{-\rho_{\lieh}}(\mathcal{P})+b+c
\]
belongs to the center of the universal enveloping algebra. Furthermore,
$z$ acts on $\left(I_{\nu},V_{K}\right)$ with the eigenvalue $\mathcal{P}(\nu+\rho_{\liea}-\rho_{\lieh})$.
\end{prop}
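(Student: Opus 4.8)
The plan is to read the statement as a filtered, Iwasawa-adapted form of the Harish--Chandra isomorphism, the one genuinely new input being a leading-symbol computation that is responsible for the drop by $2$ in the $\liea$-degree.

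\emph{Producing $z$.} Fix a positive system of $\CR$ compatible with $\rR$; then the sum $\lien'$ of the positive root spaces of $\lieg_\C$ satisfies $\lien_\C\subseteq\lien'\subseteq\lien_\C\oplus\liem_\C$ and $\lieg_\C=\overline{\lien'}\oplus\lieh_\C\oplus\lien'$. The Harish--Chandra isomorphism $Z(U(\lieg_\C))\xrightarrow{\ \sim\ }S(\lieh_\C)^{\CW}$ (projection onto $U(\lieh_\C)$ along $\overline{\lien'}\,U(\lieg_\C)+U(\lieg_\C)\lien'$, followed by $\tau_{-\rho_\lieh}$) is degree-filtered, with associated graded the Chevalley restriction isomorphism; let $z\in Z(U(\lieg_\C))$ correspond to $\mathcal P$, so $z$ has filtration degree $d$. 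In the paper's normalisation of $(I_\nu,V_K)$ the scalar by which $z$ acts will turn out to be $\mathcal P(\nu+\rho_\liea-\rho_\lieh)$; rather than chase $\rho$-shift conventions here I would let the evaluation below pin the shift down unambiguously.

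\emph{Rewriting through the Iwasawa order, and the eigenvalue.} By PBW for $\lieg_\C=\lien_\C\oplus\liea_\C\oplus\liek_\C$ one has $U(\lieg_\C)=U(\lien_\C)U(\liea_\C)\oplus U(\lieg_\C)\liek_\C$; let $\pi$ be the projection onto the first summand. Since $\lieb_\C\subseteq\liek_\C$, also $U(\lieh_\C)=U(\liea_\C)\oplus U(\liea_\C)\lieb_\C U(\lieb_\C)$ with the second term in $U(\lieg_\C)\liek_\C$; let $\beta_0\in U(\liea_\C)$ be the first component of $\tau_{-\rho_\lieh}(\mathcal P)$. Write $\pi(z)=\beta(z)+b$ with $\beta(z)\in U(\liea_\C)$ its $U(\liea_\C)$-part and $b\in\lien_\C\,U(\lien_\C)U(\liea_\C)$ the remainder; then $z=\beta(z)+b+(z-\pi(z))$ with $z-\pi(z)\in U(\lieg_\C)\liek_\C$, so the desired form $z=\tau_{-\rho_\lieh}(\mathcal P)+b+c$, $c\in U(\lieg_\C)\liek_\C$, follows once $\beta(z)=\beta_0$. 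To see this, compute the scalar by which $z$ acts on $(I_\nu,V_K)$ as $\bigl(I_\nu(z)\varphi_0\bigr)(e)$. The part in $U(\lieg_\C)\liek_\C$ annihilates $\varphi_0$ since $\varphi_0$ is right $K$-invariant; writing $b=\sum_iX_iu_i$ with $X_i\in\lien_\C$, each term gives $\tfrac{d}{dt}\bigl(I_\nu(u_i)\varphi_0\bigr)(\exp tX_i)\big|_{0}=0$ because $I_\nu(u_i)\varphi_0$ is left $N$-invariant and $\exp tX_i\in N$; and $\beta(z)\in U(\liea_\C)$ acts on $\varphi_0|_A=(a\mapsto a^{\nu+\rho_\liea})$ through the algebra homomorphism $\nu+\rho_\liea$. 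Hence the scalar equals $(\nu+\rho_\liea)(\beta(z))$; equating with $\mathcal P(\nu+\rho_\liea-\rho_\lieh)$ and letting $\nu$ range over $\acs$ forces $\beta(z)=\beta_0$ and, simultaneously, proves the eigenvalue assertion.

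\emph{The degree bound, and the main obstacle.} It remains to show $b\in U(\lien_\C)U(\liea_\C)^{\le d-2}$, which I expect to be the only nontrivial step. Every monomial of $b$ has $\lien_\C$-degree $\ge1$ and total degree $\le d$, so $\liea_\C$-degree $\le d-1$; one must exclude the extremal case of $\lien_\C$-degree $1$ and $\liea_\C$-degree $d-1$, i.e.\ a term of that shape in $\sigma_d(b)=\sigma_d(\pi(z))-\sigma_d(\beta(z))$, where $\sigma_d$ is the degree-$d$ symbol. Now $\sigma_d(\pi(z))$ is the restriction to $\liek_\C^{\perp}=\lien_\C^{*}\oplus\liea_\C^{*}\subset\lieg_\C^{*}$ of $\sigma_d(z)\in S(\lieg_\C)$, viewed as a polynomial on $\lieg_\C^{*}$. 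Since $z$ is central, $\sigma_d(z)$ is $\ad(\liea_\C)$-invariant, hence of $\liea$-weight $0$ for $\lieg_\C=\lien_\C\oplus\liea_\C\oplus\liem_\C\oplus\lienb_\C$ (the $\ad(A)$-eigenspaces); and because $\liek_\C=\liem_\C\oplus\{X+\theta X\colon X\in\lien_\C\}$, restricting to $\liek_\C^{\perp}$ amounts to setting the $\liem_\C^{*}$-variables to $0$ and the $\lienb_\C^{*}$-variables equal to $-\theta$ of the $\lien_\C^{*}$-variables. A weight-$0$ monomial is either pure in the $\liea_\C$- and $\liem_\C$-variables (and restricts into $\sigma_d(\beta(z))$) or it couples a factor of positive $\lien_\C$-degree with a factor of positive $\lienb_\C$-degree, which after the substitution acquires $\lien_\C$-degree $\ge2$; so $\sigma_d(\pi(z))-\sigma_d(\beta(z))$ has no term linear in the $\lien_\C$-variables, the extremal monomials in $b$ do not occur, and the degree count gives $b\in U(\lien_\C)U(\liea_\C)^{\le d-2}$. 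The hard part is exactly this cancellation: the naively expected degree-$(d-1)$ correction disappears because inside $\liek_\C$ the subalgebra $\lien_\C$ is $\theta$-coupled to $\lienb_\C$, doubling the $\lien_\C$-degree of every off-diagonal weight-$0$ term; the Harish--Chandra input and the $\rho$-shift bookkeeping are routine by comparison.
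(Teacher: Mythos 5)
The paper itself offers no proof of this Proposition (it is quoted from \cite[\S4]{SilbermanVenkatesh:SQUE_Lift}), so your argument has to stand on its own. The part that carries the real content does: the decomposition $U(\lieg_{\C})=U(\lien_{\C})U(\liea_{\C})\oplus U(\lieg_{\C})\liek_{\C}$ is filtration-compatible, the degree-$d$ symbol of a central element is $\ad$-invariant and hence of $\liea$-weight zero, and restriction to $\liek_{\C}^{\perp}$ kills the $\liem_{\C}$-variables and converts each $\lienb_{\C}$-variable into ($-\theta$ of) an $\lien_{\C}$-variable, so every weight-zero monomial that is not pure in $\liea_{\C}\oplus\liem_{\C}$ restricts to one of $\lien_{\C}$-degree at least $2$. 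Combined with the trivial bound on the lower-order terms this correctly yields $b\in U(\lien_{\C})U(\liea_{\C})^{\leq d-2}$, which is exactly the degree drop the application needs.

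There is, however, a genuine gap in the remaining step, and it is not mere convention-chasing. Your evaluation at the spherical vector only shows that $z$ acts on $\varphi_{0}$ by $(\nu+\rho_{\liea})(\beta(z))$, where $\beta(z)$ is the $U(\liea_{\C})$-component of the Iwasawa projection of $z$; at that point you ``equate with $\mathcal{P}(\nu+\rho_{\liea}-\rho_{\lieh})$'', but that identity \emph{is} the eigenvalue assertion you are trying to prove, and nothing earlier in your argument supplies it: $z$ was defined through the Harish--Chandra projection relative to $\lieh_{\C}$ and a Borel of $\lieg_{\C}$, and no step compares that projection with the Iwasawa projection along $U(\lieg_{\C})\liek_{\C}$. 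So the argument is circular exactly at the point where the shift $\rho_{\lieh}-\rho_{\liea}$ would have to be computed; that comparison is the standard but non-formal calculation of the infinitesimal character of the (spherical) principal series, carried out e.g.\ in \cite[Prop.\ 8.22]{Knapp:Rpn_Th_SS_Gps} by rewriting the $\lieh_{\C}$-root vectors in Iwasawa coordinates and tracking which terms fall into $U(\lieg_{\C})\liek_{\C}$ and which produce the $\rho$-shift. Either cite that result or reproduce the computation; once $\beta(z)=\beta_{0}$ is established, your evaluation does give the eigenvalue. A smaller point to tidy up at the same time: the Proposition asserts that $z$ acts by this scalar on all of $\left(I_{\nu},V_{K}\right)$, whereas your computation only evaluates $I_{\nu}(z)\varphi_{0}$ at the identity; for reducible $I_{\nu}$ one should invoke (or prove) the standard fact that a central element acts on the whole induced module by its infinitesimal character, which the cited computation also provides.
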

It follows that for such $\mathcal{P}$ we have:
\[
\mathcal{I}_{\nu}\left(\tau_{\rho_{\liea}-\rho_{\lieh}+\nu-2\Re(\nu)}(\mathcal{P})+\tau_{\rho_{\liea}+\nu-2\Re(\nu)}(b)-\mathcal{P}(\nu+\rho_{\liea}-\rho_{\lieh})\right)\left(\varphi_{0}\otimes\delta\right)=0
\]
(to see this unwind the definitions, using the fact that $I_{\nu}(c)\varphi_{0}=0$).

Thinking of $\mathcal{P}$ as a function on $\lieh_{\C}^{*}$, let
$P'(\nu)$ denote its differential at $\nu\in\acs$. This is an element
of the cotangent space to $\lieh_{\C}^{*}$, that is an element of
$\lieh_{\C}$.
\begin{prop}
Let $\mathcal{P}\in U(\lieh_{\C})^{\CW}$. Then there exists a polynomial
map $J\colon\acs\to U(\lieg_{\C})$ ($\acs$ thought of as a \emph{real}
vector space), of degree at most $d-2$ in the parameters $\Im(\nu)$,
such that for any unitarizable parameter $\nu\in\acs$, 
\[
\mathcal{I}_{\nu}\left(\mathcal{P}'(\tilde{\nu})+\frac{J(\nu)}{\left\Vert \nu\right\Vert ^{d-1}}\right)\left(\varphi_{0}\otimes\delta\right)=0\,.
\]
\end{prop}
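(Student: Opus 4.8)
The plan is to read off the leading order in $\Im(\nu)$ of the preceding displayed identity. That identity asserts $\mathcal{I}_{\nu}(E_\nu)(\varphi_{0}\otimes\delta)=0$ for \emph{every} $\nu\in\acs$ (it came only from the central character, so no unitarizability is used there), where, assuming $\mathcal P$ homogeneous of degree $d$,
\[
E_\nu=\tau_{\rho_\liea-\rho_\lieh+\nu-2\Re(\nu)}(\mathcal{P})+\tau_{\rho_\liea+\nu-2\Re(\nu)}(b)-\mathcal{P}(\nu+\rho_\liea-\rho_\lieh)\,,
\]
with $b\in U(\lien_\C)U(\liea_\C)^{\le d-2}$ as in the preceding Proposition. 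Put $s=-\bar\nu+\rho_\liea-\rho_\lieh$ and $s'=-\bar\nu+\rho_\liea$, using $\nu-2\Re(\nu)=-\bar\nu$. By construction $\tau_{\rho_\liea-\rho_\lieh+\nu-2\Re(\nu)}(\mathcal{P})$ is the translate $\mathcal{P}(\,\cdot\,+s)$, whose homogeneous components $\mathcal{P}_{[k]}(s)\in S^{k}(\lieh_\C)$ have degree $d-k$ in $s$, with $\mathcal{P}_{[d]}=\mathcal{P}$, $\mathcal{P}_{[1]}(s)=\mathcal{P}'(s)$ and $\mathcal{P}_{[0]}(s)=\mathcal{P}(s)$.

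Now restrict to unitarizable $\nu$. By Fact~\ref{fac: Int-ops}(3) the real part $\Re(\nu)$ is confined to a fixed compact set, so $s$, $s'$ and $\nu+\rho_\liea-\rho_\lieh$ all equal $i\Im(\nu)$ up to a bounded shift, and $\tau_{s'}(b)$ is a polynomial in $\Im(\nu)$ of degree $\le d-2$ because $s'$ vanishes on $\lien_\C,\liem_\C$ and $\tau_{s'}$ therefore only translates the (degree $\le d-2$) $\liea_\C$-part of $b$. Expanding every summand of $E_\nu$ in powers of $\Im(\nu)$: $\mathcal{P}'(\nu)$ accounts for the degree-$(d-1)$ piece $\mathcal{P}'(i\Im(\nu))$ of $\mathcal{P}'(s)$; the two degree-$d$ scalars $\mathcal{P}(i\Im(\nu))$ coming from $\mathcal{P}(s)$ and from $\mathcal{P}(\nu+\rho_\liea-\rho_\lieh)$ cancel; and the only remaining degree-$(d-1)$ contribution is the scalar part of $\mathcal{P}(s)-\mathcal{P}(\nu+\rho_\liea-\rho_\lieh)$, whose top term is $\langle\mathcal{P}'(i\Im(\nu)),-2\Re(\nu)\rangle$. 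Collecting everything else ($\mathcal{P}$ itself, the $\mathcal{P}_{[k]}(s)$ for $2\le k\le d-1$, $\tau_{s'}(b)$, and the sub-leading remainders) into a polynomial map $J\colon\acs\to U(\lieg_\C)$ of degree $\le d-2$ in $\Im(\nu)$, we obtain the global identity
\[
E_\nu=\mathcal{P}'(\nu)-\langle\mathcal{P}'(i\Im(\nu)),2\Re(\nu)\rangle+J(\nu)\,.
\]

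The point is that the scalar $\langle\mathcal{P}'(i\Im(\nu)),2\Re(\nu)\rangle=i^{d-1}\langle\mathcal{P}'(\Im(\nu)),2\Re(\nu)\rangle$ vanishes for unitarizable $\nu$: by Fact~\ref{fac: Int-ops}(3) there is an involution $w\in W$ with $w\nu=-\bar\nu$, hence (as $w$ acts $\R$-linearly on $\ars$) $\Re(\nu)$ lies in the $(-1)$-eigenspace of $w$ and $\Im(\nu)$ in the $(+1)$-eigenspace, and these are orthogonal; by Corollary~\ref{cor: complex} the restriction of $\mathcal{P}$ to $\acs$ is $W$-invariant, so its differential at $\Im(\nu)$ lies in the $w$-fixed subspace and is therefore orthogonal to $\Re(\nu)$. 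Thus for unitarizable $\nu$ we have $E_\nu=\mathcal{P}'(\nu)+J(\nu)$, and since $\mathcal{I}_{\nu}(E_\nu)(\varphi_{0}\otimes\delta)=0$ it follows that $\mathcal{I}_{\nu}(\mathcal{P}'(\nu)+J(\nu))(\varphi_{0}\otimes\delta)=0$; dividing the argument of $\mathcal{I}_{\nu}$ by $\norm{\nu}^{d-1}$ and using the homogeneity $\mathcal{P}'(\nu)=\norm{\nu}^{d-1}\mathcal{P}'(\tilde\nu)$ gives the assertion.

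The main obstacle is the bookkeeping of the second paragraph — arranging the Taylor expansions so that exactly one scalar term survives at the top two $\Im(\nu)$-degrees. The conceptual heart, however, is the orthogonality $\langle\mathcal{P}'(\Im(\nu)),\Re(\nu)\rangle=0$ on the unitarizable locus: it is precisely because $w\nu=-\bar\nu$ forces $\Re(\nu)\perp\Im(\nu)$ that the possibly nonzero ``degenerate'' real part drops out of the leading equation, leaving only $\mathcal{P}'(\tilde\nu)$ and, after letting $\mathcal{P}$ range over $U(\lieh_\C)^{\CW}$, invariance of the limit measure under the subgroup $A_{1}$ fixed by $W_{1}=\Stab_{W}(\nti)$.
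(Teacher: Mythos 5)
Your proof is correct and takes essentially the same route as the paper: expand the identity supplied by the preceding Proposition to degree $d-2$ in $\Im(\nu)$, isolate the one surviving scalar $\left\langle \mathcal{P}'(\Im(\nu)),2\Re(\nu)\right\rangle$ (up to the harmless factor $i^{d-1}$ you track), and kill it on the unitarizable locus using the involution with $w\nu=-\bar{\nu}$ together with Corollary \ref{cor: complex}, then rescale by $\left\Vert \nu\right\Vert ^{d-1}$ via homogeneity. The only cosmetic difference is that you get the orthogonality by restricting $\mathcal{P}$ to $\acs$ (where it is $W$-invariant), while the paper lifts $w$ to $\tilde{w}\in\CW$ and applies the chain rule on $\lieh_{\C}^{*}$ — the same mechanism.
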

\begin{proof}
Since $\mathcal{P}'$ is a homogeneous polynomial of degree $d-1$,
it suffices to show that $\tau_{\rho_{\liea}-\rho_{\lieh}+\nu-2\Re(\nu)}(\mathcal{P})+\tau_{\rho_{\liea}+\nu-2\Re(\nu)}(b)-\mathcal{P}(\nu+\rho_{\liea}-\rho_{\lieh})-\mathcal{P}'(\nu)$
is a polynomial of degree at most $d-2$ in $\nu$. It is clear that
$J_{1}(\nu)=\tau_{\rho_{\liea}+\nu-2\Re(\nu)}(b)$ is such a polynomial,
as is $J_{2}(\nu)=\tau_{\rho_{\liea}-\rho_{\lieh}+\nu-2\Re(\nu)}(\mathcal{P})-\mathcal{P}(\nu+\rho_{\liea}-\rho_{\lieh}-2\Re(\nu))-\mathcal{P}'(\nu+\rho_{\liea}-\rho_{\lieh}-2\Re(\nu))$.
Since$\mathcal{P}'$ is a polynomial of degree $d-1$ (valued in $\liea_{\C}$),
$J_{3}(\nu)=\mathcal{P}'(\nu+\rho_{\liea}-\rho_{\lieh}-2\Re(\nu))-P'(\nu)$
is also of degree at most $d-2$. It remains to consider $J_{4}(\nu)=\mathcal{P}(\nu+\rho_{\liea}-\rho_{\lieh}-2\Re(\nu))-\mathcal{P}(\nu+\rho_{\liea}-\rho_{\lieh})$
which is a polynomial map of degree $d-1$ in $\nu$.

The first two terms are difference of the values of a polynomial at
two points, we may write this in the form $\left\langle \mathcal{P}'(\nu+\rho_{\liea}-\rho_{\lieh}),-2\Re(\nu)\right\rangle +J_{5}(\nu)$
where $J_{5}(\nu)$ includes the terms of degree $d-2$ or less and
the pairing is the one between $\lieh_{\C}$and $\acs$. Finally,
$\mathcal{P}'(\nu+\rho_{\liea}-\rho_{\lieh})-\mathcal{P}'(\Im(\nu))$
has degree $d-2$ in $\Im(\nu)$. Setting $J_{6}(\nu)=\left\langle \mathcal{P}'(\nu+\rho_{\liea}-\rho_{\lieh})-\mathcal{P}'(\Im(\nu)),-2\Re(\nu)\right\rangle $
we see that $\varphi_{0}\otimes\delta$ is annihilated by:
\[
P'(\nu)+\sum_{i=1}^{6}J_{i}(\nu)-2\left\langle \mathcal{P}'(\Im(\nu)),\Re(\nu)\right\rangle \,.
\]

We conclude by showing that the final scalar vanishes. By assumption
there exists $w\in W$ such that $w\nu=-\bar{\nu}$. By Corollary
\ref{cor: complex} there exist $\tilde{w}\in\CW$ such that $\tilde{w}\nu=-\bar{\nu}$.
Applying the chain rule to $\mathcal{P}=\mathcal{P}\circ\tilde{w}$
we see that $\mathcal{P}'(\Im(\nu))$ is fixed by $\tilde{w}$, while
$\tilde{w}\Re(\nu)=-\tilde{w}\Re(\nu)$.\end{proof}
\begin{cor}
Let $\left\{ R_{n}\right\} _{n=1}^{\infty}$ be a conveniently arranged
sequence of intertwining operators from $(I_{\nu_{n}},V_{K})$ to
$L^{2}(X)$. Then the limit distribution $\mi=\mi\left(\varphi_{0}\otimes\delta_{1}\right)$
is $H$-invariant for any $H=\mathcal{P}'(\nti)$, where $\mathcal{P}\in U(\lieh_{\C})^{\CW}$.\end{cor}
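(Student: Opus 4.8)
The plan is to combine the annihilation statement of the preceding Proposition with the fact that $\varphi_0\otimes\delta_1$ differs from $\varphi_0\otimes\delta$ only through averaging over the compact group $M_1$, and to transfer the conclusion to the limiting distribution $\mi$ by letting $n\to\infty$. Concretely, fix $\mathcal{P}\in U(\lieh_\C)^{\CW}$ homogeneous of degree $d$ and set $H=\mathcal{P}'(\nti)\in\lieh_\C$; since $\nti\in i\ars$ and $\mathcal{P}$ is $\CW$-invariant, the chain-rule argument already used shows $\mathcal{P}'(\nti)\in\liea_\C$, in fact $H\in\liea$ (it is purely imaginary times a real vector, i.e.\ lies in $\liea$ after the usual identification), so that "$H$-invariance of $\mi$" makes sense as invariance under the one-parameter subgroup $\exp(tH)\subset A$. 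The Proposition gives, for each unitarizable $\nu_n$,
\[
\mathcal{I}_{\nu_n}\!\left(\mathcal{P}'(\ntn)+\frac{J(\nu_n)}{\norm{\nu_n}^{d-1}}\right)\left(\varphi_0\otimes\delta\right)=0 .
\]

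First I would pass from $\delta$ to $\delta_1$: since $\delta_1(f)=\int_{M\backslash M_1}f(m_1)\,dm_1$ is an average of right-translates of $\delta$ by elements of $M_1\subset K$, and $M_1=Z_K(\nti)$ commutes with $H=\mathcal{P}'(\nti)$, applying the operator $\mathcal{I}_{\nu_n}(\cdots)$ and then integrating over $M\backslash M_1$ shows $\mathcal{I}_{\nu_n}(\mathcal{P}'(\ntn)+J(\nu_n)\norm{\nu_n}^{-(d-1)})(\varphi_0\otimes\delta_1)=0$ as well — here I use that $M_1$-translation is an intertwiner of the $K$-action that commutes with $I_{\nu_n}(m_1\cdot H\cdot m_1^{-1})=I_{\nu_n}(H)$, together with the fact (Corollary~\ref{cor: complex}) that the action of $\mathcal{P}'(\ntn)$ is through $\liea_\C$, a quotient on which $\Ad(m_1)$ acts trivially. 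Next, since $\mu_{R_n}$ is a $\gK$-intertwiner from $(\mathcal{I}_{\nu_n},\mathcal{V}_K)$ to $\Distr$, applying $\mu_{R_n}$ to the displayed identity yields, for every $g\in\Test$,
\[
\mn\!\left(\left(\mathcal{P}'(\ntn)+\tfrac{J(\nu_n)}{\norm{\nu_n}^{d-1}}\right)(\varphi_0\otimes\delta_1)\right)(g)=0 ,
\]
which after an integration by parts on $X$ (moving the differential operator onto $g$, exactly as in the calculus of \S\ref{sub: A-inv}) reads $\mn(\varphi_0\otimes\delta_1)\big(\mathcal{L}_n g\big)=0$ where $\mathcal{L}_n$ is the right-invariant differential operator on $X$ corresponding to $-\big(\mathcal{P}'(\ntn)+J(\nu_n)\norm{\nu_n}^{-(d-1)}\big)$, acting on the finite set of $K$-types of $g$.

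Finally I would take the limit. As $n\to\infty$ we have $\ntn\to\nti$ and $\norm{\nu_n}\to\infty$, and $J$ has degree at most $d-2$ in $\Im(\nu_n)$ while $\norm{\Im(\nu_n)}\sim\norm{\nu_n}$, so $J(\nu_n)\norm{\nu_n}^{-(d-1)}\to 0$; hence for any fixed $g$, $\mathcal{L}_n g\to -\mathcal{P}'(\nti)g$ in $\Test$ (convergence in the finitely many relevant $K$-types). Since $\mn(\varphi_0\otimes\delta_1)(\mathcal{L}_n g)=0$ for all $n$ and $\mn(\varphi_0\otimes\delta_1)\to\mi$ weak-$*$ while the test functions $\mathcal{L}_n g$ converge, we get $\mi\big(\mathcal{P}'(\nti)g\big)=0$ for all $g\in\Test$, i.e.\ $\mi$ is annihilated by the Lie-derivative along $H=\mathcal{P}'(\nti)$, which — $\mi$ being a finite measure by Corollary~\ref{cor: positive} — is exactly invariance under the flow $\exp(tH)$. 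The main obstacle I anticipate is the joint limit: one must be careful that the error term $J(\nu_n)\norm{\nu_n}^{-(d-1)}$ is controlled uniformly on the finite-dimensional span of $K$-types appearing in $g$ (so that convergence of the operators $\mathcal{L}_n$ really holds in the topology in which $\mi$ is continuous), and that passing $\mu_{R_n}$ through the identity and then taking $n\to\infty$ does not run into the (harmless, since $\nu_n$ is unitarizable and stays in the closed chamber) pole structure of the intertwining operators $R_n$ — this is precisely where Theorem~\ref{mainthm: intops} is used to keep the measures $\mn(f_1,f_2)$ uniformly bounded.
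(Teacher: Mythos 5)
The final limiting step of your argument (apply $\mu_{R_n}$, integrate by parts onto $g$, use that $J(\nu_n)\norm{\nu_n}^{-(d-1)}\to0$ because $\Re(\nu_n)$ stays bounded, and pass to the weak-{*} limit) is exactly the paper's ``pass to the limit in the Proposition''. The gap is in the step before it, where you transfer the annihilation identity from $\varphi_{0}\otimes\delta$ to $\varphi_{0}\otimes\delta_{1}$ by averaging over $M\backslash M_{1}$. This does not work as stated: $M_{1}=Z_{K}(\nti)$ is strictly larger than $M$ precisely in the degenerate situation this paper treats, and its elements do \emph{not} act trivially on $\liea_{\C}$ (that is $M$, not $M_{1}$), do not normalize $\lien\oplus\liea\oplus\liem$, and do not fix $\mathcal{P}'(\ntn)$ or the error term $J(\nu_{n})\in U(\lieg_{\C})$ under $\Ad$. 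Consequently the translate of $\delta$ by $m_{1}\in M_{1}$ is no longer an eigendistribution for $U(\liea_{\C}\oplus\liem_{\C}\oplus\lien_{\C})$, the Proposition does not apply to it, and the identity $\mathcal{I}_{\nu_{n}}\bigl(\mathcal{P}'(\ntn)+J(\nu_{n})\norm{\nu_{n}}^{-(d-1)}\bigr)(\varphi_{0}\otimes\delta_{1})=0$ is unjustified (note also that for finite $n$ one has $\ntn\neq\nti$, so not even the limiting stabilizer symmetry is available). Your auxiliary claims that Corollary \ref{cor: complex} gives triviality of $\Ad(m_{1})$ on $\liea_{\C}$ and that $H=\mathcal{P}'(\nti)\in\liea$ are likewise false: $\mathcal{P}'(\nti)$ lies in $\lieh_{\C}^{W_{1}'}$ and may have a nonzero $\lieb$-component; it is only after intersecting with $\liea_{\C}$ (the paper's final Lemma) that one lands in $\liea_{\C}^{W_{1}}$.

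The paper's proof makes the opposite, and much softer, reduction: since $\mn=\mn(\varphi_{0}\otimes\delta_{1})$ is right-$M_{1}$-invariant (it lives on $\Gamma\backslash G/M_{1}$), it suffices to test the invariance against $M_{1}$-invariant $g\in\Test^{M_{1}}$, and for such $g$ one has $\mn(\varphi_{0}\otimes\delta_{1})(g)=\mn(\varphi_{0}\otimes\delta)(g)$, because only the $M_1$-average of the second test vector matters. One then never needs an identity for $\delta_{1}$ at all: the Proposition is used with $\delta$ untouched, and the limit argument you wrote in your last paragraph finishes the proof. If you replace your averaging step by this reduction to $M_{1}$-invariant test functions, your argument becomes the paper's.
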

\begin{proof}
By additivity it suffices to prove this when $\mathcal{P}$ is homogeneous.
Next, since $\mn=\mn\left(\varphi_{0}\otimes\delta_{1}\right)$ are
$M_{1}$-invariant distributions (in fact, we are lifting to $\Gamma\backslash G/M_{1}$,
not to $\Gamma\backslash G$), it suffices to consider $M_{1}$-invariant
test functions $g\in\Test^{M_{1}}$. For these we have $\mn\left(\varphi_{0}\otimes\delta_{1}\right)(g)=\mn\left(\varphi_{0}\otimes\delta\right)(g)$.
We conclude that it is enough to show that $\mi\left(\varphi_{0}\otimes\delta\right)$
are $\mathcal{P}'(\nti)$-invariant for homogeneous $\mathcal{P}$
-- but this follows immediately by passing to the limit in the Proposition.
\end{proof}
Since the $\CW$-invariant polynomials on $\lieh_{\C}$ are dense
in the space of smooth functions on the sphere there, it is clear
that $\left\{ \mathcal{P}'(\nti)\right\} $ is precisely the set $\lieh_{\C}^{W_{1}'}$
where $W_{1}'=\Stab_{\CW}(\nti)$. Claim \eqref{enu: m-A-inv} of
the main Theorem is then contained in:
\begin{lem}
Let $W_{1}=\Stab_{W}(\nti)$. Then $\liea_{\C}^{W_{1}}=\lieh_{\C}^{W_{1}'}\cap\liea_{\C}$.\end{lem}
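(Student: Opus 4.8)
The plan is to reduce both sides of the identity to explicit systems of linear (root) equations on $\liea_\C$ and then observe that the two systems coincide. The one substantive ingredient is the classical fact that the stabilizer of a point in a finite reflection group is generated by the reflections it contains.

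First I would fix identifications. View $\acs$ inside $\lieh_\C^*$ as the annihilator of $\lieb_\C$, so $\nti$ is a functional on $\lieh_\C$ that vanishes on $\lieb_\C$; write $\langle\cdot,\cdot\rangle$ for the form induced by $B$. By assumption the restriction to $\liea_\C$ of a root $\alpha\in\CR$ is either $0$ or a root $\bar\alpha\in\rR$, and every element of $\rR$ occurs as such a restriction. Two trivial remarks carry the argument: for $H\in\liea_\C$ one has $\alpha(H)=\bar\alpha(H)$ (because $H\perp\lieb_\C$), and $\langle\nti,\alpha\rangle=\langle\nti,\bar\alpha\rangle$ (because the vector dual to $\nti$ lies in $\liea_\C$). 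Also, $\nti\in i\ars$ is a scalar multiple of a genuine point of the real reflection representations of $W$ on $\ars$ and of $\CW$ on the real form of $\lieh_\C$ spanned by the roots, so its stabilizers there are governed by the real theory.

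Applying reflection generation: $W_1=\Stab_W(\nti)$ is generated by $\{s_\beta : \beta\in\rR,\ \langle\nti,\beta\rangle=0\}$ and $W_1'=\Stab_{\CW}(\nti)$ by $\{s_\alpha : \alpha\in\CR,\ \langle\nti,\alpha\rangle=0\}$. Hence $\liea_\C^{W_1}=\{H\in\liea_\C : \beta(H)=0 \text{ for every } \beta\in\rR \text{ with } \langle\nti,\beta\rangle=0\}$, while $\lieh_\C^{W_1'}\cap\liea_\C=\{H\in\liea_\C : \alpha(H)=0 \text{ for every } \alpha\in\CR \text{ with } \langle\nti,\alpha\rangle=0\}$. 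In the latter, replace $\alpha(H)$ by $\bar\alpha(H)$: roots with $\bar\alpha=0$ impose no condition, and the rest read $\bar\alpha(H)=0$ with $\bar\alpha\in\rR$. It remains to check that $\{\bar\alpha : \alpha\in\CR,\ \langle\nti,\alpha\rangle=0,\ \bar\alpha\neq0\}=\{\beta\in\rR : \langle\nti,\beta\rangle=0\}$: the inclusion $\subseteq$ is immediate from $\langle\nti,\bar\alpha\rangle=\langle\nti,\alpha\rangle=0$, and for $\supseteq$ one lifts any such $\beta$ to some $\alpha\in\CR$ restricting to it and notes $\langle\nti,\alpha\rangle=\langle\nti,\beta\rangle=0$. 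Thus the two subspaces are defined by the same equations, proving the Lemma.

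The step I expect to be the real obstacle is the inclusion $\liea_\C^{W_1}\subseteq\lieh_\C^{W_1'}$, i.e.\ that a vector of $\liea_\C$ fixed by $W_1$ is fixed by \emph{all} of $W_1'=\Stab_{\CW}(\nti)$. The part of $W_1'$ visible inside $\tilde W$ is handled for free: by the Lemma on $\tilde W/W(\liem\colon\lieb)\isom W$ and the Corollary that $\tilde W$ acts on $\acs$ through that quotient, any $\tilde w\in\tilde W$ fixing $\nti$ has image in $W_1$ and restricts to that image on $\liea_\C$. But $W_1'$ need not be exhausted by $\tilde W\cap W_1'$, and reflection generation is precisely the tool that bridges the gap — after which everything reduces to the bookkeeping with the restriction map $\CR\to\rR\cup\{0\}$ carried out above.
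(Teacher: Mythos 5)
Your proposal is correct and takes essentially the same route as the paper: both invoke the fact that the stabilizer of a point in a finite reflection group is generated by the reflections it contains, and then identify the two fixed subspaces as the same intersection of root kernels in $\liea_{\C}$, using the orthogonality of $\liea$ and $\lieb$ under the Killing form to convert pairings of $\nti$ with roots of $\CR$ into pairings with their restrictions in $\rR$. The only difference is that you make explicit two points the paper leaves implicit, namely the surjectivity of the restriction map $\CR\to\rR\cup\{0\}$ and the reduction of the stabilizer of the purely imaginary $\nti\in i\ars$ to the real reflection-group setting.
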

\begin{proof}
The subgroup of a Weyl group fixing a point in $\liea$ (or its dual)
is generated by the root reflections it contains. It follows that
$W_{1}$ is generated by the root reflections $s_{\alpha}$ where
$\alpha\in\rR$ satisfying $B(\alpha,\nti)=0$ (pairing given by the
Killing form on $\lieg$) while $W_{1}'$ is generated by the root
reflections $s_{\alpha'}$ where $\alpha'\in\CR$ satisfies $B'(\alpha',\nti)=0$
(Killing form on $\lieg_{\C}$). Now $B'(\alpha',\nti)=B(\alpha'\upharpoonright_{\liea},\nti)$
since $\lieb$ is orthogonal to $\liea$, where the restrictions are
either roots (or zero). Since $s_{\alpha}$ fixes $H$ iff $\alpha(H)=0$,
while $s_{\alpha'}$ fixes $H$ iff $\alpha'(H)=0$, it follows that:
\[
\lieh_{\C}^{W_{1}'}\cap\liea_{\C}=\cap_{B(\alpha,\nti)=0}\Ker(\alpha)=\liea_{\C}^{W_{1}}\,.
\]

\end{proof}

\section*{Acknowledgments}

This work was done while the author was a member at the Institute
for Advanced Study in Princeton; his stay there was partly supported
by the Institute's NSF grant. He would like to thank Werner Müller
for a useful discussion. The author is currently supported by an NSERC
Discovery Grant.

\bibliographystyle{plain}
\bibliography{que,aut_forms,lie_gps,reference,ergodic_theory}

\end{document}